\documentclass[11pt]{article}

\setlength{\textwidth}{6.5in}
\setlength{\topmargin}{-0.5in}
\setlength{\textheight}{9in}
\setlength{\oddsidemargin}{0in}

\usepackage{graphicx}
\usepackage{amsmath}
\usepackage{amsthm}
\usepackage{amsfonts}
\usepackage{amssymb, amscd}

\usepackage[all, knot]{xy}


\newtheorem{thm}{Theorem}[section]

\newtheorem{lem}[thm]{Lemma}
\newtheorem{prop}[thm]{Proposition}

\theoremstyle{definition}
\newtheorem{defn}[thm]{Definition}
\theoremstyle{remark}
\newtheorem{rem}[thm]{Remark}
\numberwithin{equation}{section}

%

\input{epsf.sty}





\vfuzz2pt 
\hfuzz2pt 



\begin{document}

\title{Second Cohomology of  $q$-deformed Witt superalgebras}

\author{Faouzi Ammar
\and Abdenacer Makhlouf
\and Nejib Saadaoui }

\author{Faouzi Ammar \\\small{ \emph{Universit\'{e} de Sfax}}\\ \small{Facult\'{e} des Sciences,}\\
\small{B.P. 1171, Sfax 3000, Tunisia}\\ \small{Faouzi.Ammar@rnn.fss.tn}
\and
 Abdenacer Makhlouf \\
 \small{\emph{Universit\'{e} de Haute Alsace},
LMIA,} \\
\small{4, rue des Fr\`{e}res Lumi\`{e}re}\\\small{ F-68093 Mulhouse, France} \\
\small{Abdenacer.Makhlouf@uha.fr }\and Nejib Saadaoui\\\small{  \emph{Universit\'{e} de Sfax}}\\ \small{Facult\'{e} des Sciences,}\\
\small{B.P. 1171, Sfax 3000, Tunisia}\\ \small{najibsaadaoui@yahoo.fr}}

\maketitle

\begin{abstract}
The purpose of this paper is to compute the second adjoint cohomology group of $q$-deformed Witt superalgebras. They are Hom-Lie superalgebras obtained by $q$-deformation of Witt Lie superalgebra, that is one considers $\sigma$-derivations instead of classical derivations.
\end{abstract}

\section*{Introduction}
The theory of Hom-Lie superalgebras was introduced in \cite{AmmarMakhloufJA2010}.
 Representations and a cohomology theories of  Hom-Lie superalgebras was provided  in \cite{AMSa1}. Moreover we have studied central extensions and provide as application  computations of the derivations and scalar second cohomology group  of $q$-deformed Witt superalgebras. In this paper, we aim to provide  computation of second adjoint cohomology group  of  $q$-deformed Witt superalgebras.
 
 The Witt algebra is one of the simplest infinite dimensional Lie algebra. This Lie algebra of vector fields, was defined by E. Cartan. It is established that it admits one central extension, that is Virasoro algebras. The computation of the cohomology and the formal rigidity of Witt and Virasoro algebras was established by Fialowski \cite{Fial1990}, see also \cite{F3,F4,fialowski-schlichenmaier,Gelfand-Fuchs,Guieu-Roger,Schlichenmaier}. For Lie superalgebras we refer to 
\cite{Retach-Feigen,Sheunert1,Sheunert2,Sheunert3}.

In the first Section we review some preliminaries,  the cohomology of Hom-Lie superalgebras and deformation theory. In Section 2, we describe $q$-Witt superalgebras. The main result, about  second adjoint cohomology of $q$-deformed Witt superalgebras, is stated in Section 3. Its proof is given  by computing even and odd  adjoint second cohomology groups.

\section{Preliminaries}

Let  $\mathcal{G}$ be a linear superspace over a field $\mathbb{K}$ that is a $\mathbb{Z}_{2}$-graded linear space with a direct sum $\mathcal{G}=\mathcal{G}_{0}\oplus \mathcal{G}_{1}.$
The elements of $\mathcal{G}_{j}$, $j\in \mathbb{Z}_{2},$ are said to be homogenous  of parity $j.$ The parity of  a homogeneous element $x$ is denoted by $|x|.$
The space $End (\mathcal{G})$ is $\mathbb{Z}_{2}$-graded with a direct sum $End (\mathcal{G})=End _{0}(\mathcal{G})\oplus End _{1}(\mathcal{G})$, where
$End _{j}(\mathcal{G})=\{f\in End (\mathcal{G}) : f (\mathcal{G}_{i})\subset \mathcal{G}_{i+j}\}.$ 
The elements of $End_{j}(\mathcal{G})$  are said to be homogenous of parity $j.$ 
Let $\displaystyle \mathcal{E}=\oplus_{n\in\mathbb{Z}}\mathcal{E}_{n}$ be a $\mathbb{Z}$-graded linear space, a linear map $f \in End (\mathcal{E})$ is called of degree $s$ if $f(\mathcal{E}_{n})\subset\mathcal{E}_{n+s},$ for all $n\in \mathbb{Z}$.

\begin{defn}\cite{AmmarMakhloufJA2010}
A Hom-Lie superalgebra  is a triple $(\mathcal{G},\ [.,.],\ \alpha)$\ consisting of a superspace $\mathcal{G}$, an even bilinear map \ $\ [.,.]:\mathcal{G}\times \mathcal{G}\rightarrow \mathcal{G}$ \ and an even superspace homomorphism \ $ \alpha:\mathcal{G}\rightarrow \mathcal{G} \ $satisfying
\begin{eqnarray}
&&[x,y]=-(-1)^{|x||y|}[y,x],\\
&&(-1)^{|x||z|}[\alpha(x),[y,z]]+(-1)^{|z||y|} [\alpha(z),[x,y]]+(-1)^{|y||x|} [\alpha(y),[z,x]]=0,\label{jacobie}
\end{eqnarray}
for all homogeneous element $x, y, z$ in $\mathcal{G}.$
\end{defn}
\subsection{ Cohomology of  Hom-Lie Superalgebras}
Let $(\mathcal{G},[.,.],\alpha)$ be a Hom-Lie superalgebra and $V=V_{0}\oplus V_{1}$ an arbitrary vector superspace. Let $\beta\in\mathcal{G}l(V)$ be an arbitrary even linear self-map on $V$  and
$ [.,.]_{V}:
\begin{array}{ccc}
\mathcal{G}\times V& \rightarrow& V \\
(g,v)&\mapsto& [g,v]_{V}
\end{array}$ 
 be a bilinear map  satisfying $[\mathcal{G}_{i},V_{j}]_{V}\subset V_{i+j}$ where $i,j\in \mathbb{Z}_{2}.$
\begin{defn}
The triple $(V,[.,.]_{V}, \beta)$  is called a Hom-module on the Hom-Lie superalgebra $\mathcal{G}=\mathcal{G}_{0}\oplus \mathcal{G}_{1}$ or  $\mathcal{G}$-module $V$  if the  even bilinear map $[.,.]_{V}$ satisfies
\begin{eqnarray}
[\alpha(x),\beta(v)]_{V}&=&\beta([x,v]_{V}) \label{rep1}
\\
\left[[x,y],\beta(v)\right]_{V}&=&\left[\alpha(x),[y,v]\right]_{V}-(-1)^{|x||y|}\left[\alpha(y),[x,v]\right]_{V}, \label{rep2}
\label{mod}
\end{eqnarray}
 for all homogeneous elements $x, y$ in $\mathcal{G}$ and $v\  \in\  V .$\\
Hence, we say that $(V,[.,.]_{V}, \beta)$ is a representation of $\mathcal{G}.$
\end{defn}
\begin{rem}
When $\beta$ is the zero-map, we say that the module $V$ is trivial.
\end{rem}

Let   $x_1,\cdots ,x_k$ be $k$ homogeneous elements of $\mathcal{G}$. We denote by $|(x_1,\cdots ,x_k)|=|x_1|+\cdots +|x_k| \  (\mod 2)$ the parity of an element $(x_{1},\dots,x_{k})$ in $\mathcal{G}^k$.\\
The set $C^{k}(\mathcal{G},V)$ of $k$-cochains on space $\mathcal{G}$ with values in $V,$ is the set of $k$-linear maps $f:\otimes^{k}\mathcal{G}\rightarrow V$
satisfying
$$f(x_{1},\dots,x_{i},x_{i+1},\dots,x_{k})=-(-1)^{|x_{i}||x_{i+1}|}f(x_{1},\dots,x_{i+1},x_{i},\dots,x_{k})\ \textrm{ for } 1\leq i\leq k-1 .$$
For $k=0$ we have $C^{0}(\mathcal{G},V)=V.$\\
The map $f$ is called even (resp. odd) when we have $f(x_{1},\dots,x_{k})\in V_0$ (resp. $f(x_{1},\dots,x_{k})\in V_1$)  for all even (resp odd ) element $(x_{1},\dots,x_{k})\in \mathcal{G}^k.$\\
A $k$-cochain on $\mathcal{G}$ with values in  $V$ is defined to be a $k$-hom-cochain $f \in C^{k}(\mathcal{G},\ V)$ such that it is compatible with $\alpha $ and $\beta$ in the sense that $\beta \circ f=f \circ \alpha,$ i.e.
$\beta \circ f (x_{1},\dots ,x_{k})=f(\alpha(x_{1}),\dots ,\alpha(x_{k})).$
Denote $C^{k}_{\alpha ,\beta}(\mathcal{G},\ V)$ the set of k-hom-cochains:
\begin{eqnarray}
C^{k}_{\alpha ,\beta}(\mathcal{G},\ V)&=&
\{f\in C^{k}(\mathcal{G},\ V) :\  \beta  \circ f=f \circ \alpha \}. \label{cohd}
\end{eqnarray}
Define $\delta^{k}:C^{k}(\mathcal{G},\ V)\rightarrow C^{k+1}(\mathcal{G},\ V)$ by setting

\begin{eqnarray*}
&&\delta^{k}(f)(x_{0},\dots,x_{k})\nonumber \\&&=\sum_{0\leq s < t\leq k}(-1)^{t+|x_{t}|(|x_{s+1}|+\dots+|x_{t-1}|)}
f\Big(\alpha(x_{0}),\dots,\alpha(x_{s-1}),[x_{s},x_{t}],\alpha(x_{s+1}),\dots,\widehat{x_{t}},\dots,\alpha(x_{k})\Big) \nonumber \\
&&+\sum_{s=0}^{k}(-1)^{s+|x_{s}|(|f|+|x_{0}|+\dots+|x_{s-1}|)}\Bigg[\alpha^{k-1}(x_{s}), f\Big(x_{0},\dots,\widehat{x_{s}},\dots,x_{k}\Big)\Bigg]_{V},
\end{eqnarray*}
 where $f\in C^{k}(\mathcal{G},\ V)$,  $|f|$ is the parity of $f$, $\ x_{0},....,x_{k}\in \mathcal{G}$ and $\ \widehat{x_{i}}\ $  means that $x_{i}$ is omitted.\\
\begin{thm}\cite{AMSa1}
Let $(\mathcal{G},[.,.],\alpha)$ be a multiplicative Hom-Lie superalgebra and $(V,[.,.]_V,\beta)$ be a $\mathcal{G}$-Hom-module.\\
The pair $(\oplus_{k>0}C^{k}_{\alpha ,\beta}(\mathcal{G},\ V),\{\delta^{k}\}_{k>0})$ defines a cohomology complex, that is
 $\delta^{k} \circ \delta^{k-1}=0.$
\end{thm}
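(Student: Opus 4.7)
The plan is to verify two assertions. First, $\delta^k$ actually carries $C^k_{\alpha,\beta}(\mathcal{G},V)$ into $C^{k+1}_{\alpha,\beta}(\mathcal{G},V)$: given $f\in C^k_{\alpha,\beta}$, one checks graded antisymmetry of $\delta^k f$ in adjacent arguments and the compatibility $\beta\circ\delta^k f=\delta^k f\circ\alpha^{\otimes(k+1)}$. Antisymmetry is immediate from the defining formula, the graded antisymmetry of $[\cdot,\cdot]$, and that of $f$. For the compatibility, one applies $\beta$ to $\delta^k f(x_0,\ldots,x_k)$ and uses the multiplicativity $\alpha([x_s,x_t])=[\alpha(x_s),\alpha(x_t)]$, the hypothesis $\beta\circ f=f\circ\alpha^{\otimes k}$ to push $\beta$ inside $f$, and the module axiom \eqref{rep1} to rewrite the action terms as $\beta\bigl([\alpha^{k-1}(x_s),f(\ldots)]_V\bigr)=[\alpha^k(x_s),\beta f(\ldots)]_V$.

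For the main identity $\delta^{k+1}\delta^k f=0$, I would expand $\delta^{k+1}\bigl(\delta^k f\bigr)(x_0,\ldots,x_{k+1})$ directly. Since each $\delta$ is the sum of a bracket-insertion part $B$ and an action part $A$, the double expansion splits into four families $BB$, $BA$, $AB$, $AA$, and the cancellation pattern parallels the classical Chevalley--Eilenberg computation, twisted by the structure maps $\alpha$ and $\beta$. The $BB$-terms collapse into triple sums indexed by $\{s<t<u\}$; the three orderings in which a given triple appears combine, after accounting for signs and pulling $\alpha$ through using multiplicativity, to the graded Hom-Jacobi identity \eqref{jacobie} applied to $(\alpha(x_s),\alpha(x_t),\alpha(x_u))$, and hence cancel. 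The $AA$-terms collapse into double sums indexed by $\{s<t\}$ and cancel via the module axiom \eqref{rep2}, applied with $v=f(\ldots)$ and using $\beta f=f\alpha^{\otimes k}$ to match $\beta(v)$ with $f$ evaluated on $\alpha$-shifted arguments. Finally, the $BA$- and $AB$-terms pair term-by-term and cancel against each other; the bridge between the two uses only \eqref{rep1} and the $\alpha$--$\beta$ compatibility of $f$.

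The hardest part is the sign and index bookkeeping. Each contribution to $\delta^{k+1}\delta^k f$ carries a Koszul sign $(-1)^{t+|x_t|(|x_{s+1}|+\cdots+|x_{t-1}|)}$ or its analogue after a bracket has already been inserted, and when a bracket or action is applied at a second position one must recompute the parity of the enlarged left segment. Reconciling the signs among the three orderings that produce a given triple in family $BB$, and between the matched contributions of $BA$ and $AB$, is the substantive calculation; once these signs are lined up correctly, the three mechanisms above close the identity $\delta^{k+1}\delta^k f=0$, so the argument amounts to the Hom-Lie algebra computation with graded signs inserted throughout.
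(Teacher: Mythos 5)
A preliminary remark: the paper you are working from does not actually prove this theorem --- it is imported from \cite{AMSa1} --- so your attempt can only be measured against the standard direct-expansion argument, which is exactly the route you propose: verify that $\delta^{k}$ preserves the hom-cochain condition, then expand $\delta^{k+1}\circ\delta^{k}$ into the four families $BB$, $BA$, $AB$, $AA$ and cancel using \eqref{jacobie}, \eqref{rep1}, \eqref{rep2} and multiplicativity. That skeleton is correct, and your well-definedness step (graded antisymmetry plus $\beta\circ\delta^{k}f=\delta^{k}f\circ\alpha^{\otimes(k+1)}$ via $\alpha([x,y])=[\alpha(x),\alpha(y)]$, $\beta\circ f=f\circ\alpha^{\otimes k}$ and \eqref{rep1}) is the right one and is often omitted, so it is good that you include it.

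Two places where your description of the cancellation pattern is not quite what actually happens, and which you would need to repair in a full write-up. First, in the $BB$ family only the \emph{nested} terms, those producing $f(\ldots,[[x_s,x_t],\alpha(x_u)],\ldots)$, are killed by the Hom-Jacobi identity, and \eqref{jacobie} is applied to the original triple $(x_s,x_t,x_u)$ --- the twist by $\alpha$ on the outer slot is already built into \eqref{jacobie}, so it is not Jacobi at $(\alpha(x_s),\alpha(x_t),\alpha(x_u))$; the terms with two disjoint brackets cancel in pairs by signs alone, and it is there (in matching $f(\ldots,\alpha([x_u,x_v]),\ldots)$ with $f(\ldots,[\alpha(x_u),\alpha(x_v)],\ldots)$) that multiplicativity is indispensable. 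Second, the $AA$ terms do not cancel among themselves: the signed pair built from $[\alpha^{k}(x_s),[\alpha^{k-1}(x_t),f(\ldots)]_V]_V$ and its transpose is converted by \eqref{rep2} (with $v=f(\ldots)$ and $\beta(v)=f(\alpha(\ldots))$, again using multiplicativity) into a surviving term $[\alpha^{k-1}([x_s,x_t]),f(\alpha(\ldots))]_V$, and this survivor must then cancel against the action terms of the inner coboundary whose acting element is the freshly inserted bracket $[x_s,x_t]$. So the true pairing is a three-way interaction among $AA$ and that sub-family of the cross terms; only the remaining cross terms, with acting element $\alpha^{k}(x_u)$ for $u\notin\{s,t\}$, cancel one against one via \eqref{rep1} as you state. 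With these corrections the identity closes; as a sketch your proposal is sound, but the sign and index verification you defer is where essentially all the content lies, so a complete proof would have to carry it out explicitly.
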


Let $(\mathcal{G},[.,.],\alpha)$ be a multiplicative Hom-Lie superalgebra and $(V,[.,.]_V,\beta)$ be a $\mathcal{G}$-Hom-module.\\
We have with respect the cohomology defined by the coboundary operators
\begin{equation*}
 \delta^{k}:C^{k}_{\alpha ,\beta}(\mathcal{G},V)\longrightarrow    C^{k+1}_{\alpha ,\beta}(\mathcal{G},V).
\end{equation*}
\begin{itemize}
\item The $k$-cocycles space is defined as $Z^{k}(\mathcal{G},V)=\ker \ \delta^{k}.$ \\ The even (resp. odd ) k-cocycles  space is defined as
$Z^{k}_{0}(\mathcal{G},V)=Z^{k}(\mathcal{G},V)\cap (C^{k}_{\alpha ,\beta}(\mathcal{G},V))_{0}$ (resp. $Z^{k}_{1}(\mathcal{G},V)=Z^{k}(\mathcal{G},V)\cap (C^{k}_{\alpha ,\beta}(\mathcal{G},V))_{1}$.
 \item  The $k$-coboundary space is defined as    $B^{k}(\mathcal{G},V)=Im \ \delta^{k-1}.$ \\The  even (resp. odd ) k-coboundaries space is  $B^{k}_{0}(\mathcal{G},V)= B^{k}(\mathcal{G},V)\cap (C^{k}_{\alpha ,\beta}(\mathcal{G},V))_{0}$ (resp.  $B^{k}_{1}(\mathcal{G},V)= B^{k}(\mathcal{G},V)\cap (C^{k}_{\alpha ,\beta}(\mathcal{G},V))_{1}.$
  \item The $k^{th}$ cohomology  space is the quotient  $H^{k}(\mathcal{G},V)=Z^{k}(\mathcal{G},V)/ B^{k}(\mathcal{G},V). $ It decomposes as well as even and odd $k^{th}$ cohomology spaces.\\
 \end{itemize}
 Finally, we denote by $H^{k}(\mathcal{G},V)=H_{0}^{k}(\mathcal{G},V) \oplus H_{1}^{k}(\mathcal{G},V)$ the set $k^{th}$ cohomology space and by
 $\oplus_{k\geq 0}H^{k}(\mathcal{G},V)$ the cohomology group of the Hom-Lie superalgebra $\mathcal{G}$ with values in $V$.\\
  
  In the general case, let  $(\mathcal{G},[.,.],\alpha)$ be a Hom-Lie superalgebra. We have a $1$-coboundary (resp. $2$-coboundary operator) defined on  $\mathcal{G}$-valued cochains $C^{k}(\mathcal{G},\mathcal{G})$ such as  for $x,y,z\in\mathcal{G}$
  \begin{eqnarray}\label{cobound1}
 \delta^{1}(f)(x,y)=-f([x,y])+(-1)^{|x||f|} [x,f(y)]-(-1)^{|y|(|f|+|x|} [y,f(x)],
\end{eqnarray}
\begin{eqnarray}
\delta^{2}(f)(x,y,z)&=&-f([x,y],\alpha(z))+(-1)^{|z||y|}f([x,z],\alpha(y))+f(\alpha(x),[y,z])
\nonumber\\
&&+(-1)^{|x||f|}[\alpha(x),f(y,z)] -(-1)^{|y|(|f|+|x|)} [\alpha(y),f(x,z)]\nonumber\\
&&+(-1)^{|z|(|f|+|x|+|y|)}[\alpha(z),f(x,y)]. \ \ \ \, \label{cobound2}
\end{eqnarray}
A straightforward calculation shows that $\delta^{2}\circ\delta^{2}=0.$ We denote by $H^1(\mathcal{G},\mathcal{G})$ (resp. $H^2(\mathcal{G},\mathcal{G})$) the corresponding $1^{st}$ and $2^{nd}$ cohomology groups.
 \subsection{ Deformations of Hom-Lie superalgebras.}
 In this section we extend to Hom-Lie superalgebras the one-parameter formal deformation theory introduced by Gerstenhaber \cite{Gerst def} for associative algebras. It was extended to Hom-Lie algebras in \cite{MS3,AEM}.
 \begin{defn}
Let $( \mathcal{G},[.,.],\alpha)$ be a  Hom-Lie superalgebra. A one -parameter formal   deformation of $\mathcal{G}$ is given by the $\mathbb{K}[[t]]$-bilinear map $[.,.]_{t}:\mathcal{G}[[t]]\times\mathcal{G}[[t]]\longrightarrow\mathcal{G}[[t]]$ of the form
$\displaystyle [.,.]_{t}=\sum_{i\geq 0} t^i[.,.]_{i} $, 
where each $[.,.]_{i}$ is an even bilinear  map $[.,.]_{i}:\mathcal{G}\times\mathcal{G}\longrightarrow\mathcal{G}$ (extended to be $\mathbb{K}[[t]]$-bilinear) and 
$[.,.]=[.,.]_{0}$  satisfying the following conditions
\begin{eqnarray}
&[x,y]_{t}=-(-1)^{|x||y|}[y,x]_{t},\\ 
&(-1)^{|x||z|}[\alpha(x),[y,z]_{t}]_{t}+(-1)^{|z||y|} [\alpha(z),[x,y]_{t}]_{t}+(-1)^{|y||x|} [\alpha(y),[z,x]_{t}]_{t}=0. \label{djacobie}
\end{eqnarray}
The  deformation is said  of \emph{order} $k$ if $\displaystyle [.,.]_{t}=\sum_{i= 0}^{k} t^i[.,.]_{i}$.\\
Given two deformations $\mathcal{G}_{t}=( \mathcal{G},[.,.]_{t},\alpha)$ and  $(\mathcal{G}_{t}'= \mathcal{G},[.,.]'_{t},\alpha)$ of $\mathcal{G}$ where $\displaystyle [.,.]_{t}=\sum_{i\geq0}t^i[.,.]_{i}$ and
$\displaystyle [.,.]_{t}'=\sum_{i\geq0}t^i[.,.]_{i}'$ with $[.,.]_{0}=[.,.]_{0}'=[.,.].$  We say that  $\mathcal{G}_{t}$ and $\mathcal{G}_{t}'$
 are \emph{equivalent} if there exists a formal automorphism $\displaystyle \phi_{t}=\sum_{i\geq0} \phi_{i}t^i$ where $\phi_{i}\in End (\mathcal{G})$
 and $\phi_{0}=id_{\mathcal{G}}$, such that $$ \phi_{t}([x,y]_{t})=[\phi_{t}(x),\phi_{t}(y)]_{t}'.  $$
 A deformation $\mathcal{G}_{t}$ is said to be \emph{trivial} if and only if  $\mathcal{G}_{t}$ is equivalent to $\mathcal{G}$ (viewed as a superalgebra on $\mathcal{G}[[t]].$)
\end{defn}


The identity (\ref{djacobie}) is called deformation equation  and it is equivalent to
\begin{equation*}
    \circlearrowleft_{x,y,z} \sum_{i\geq0,j\geq0}(-1)^{|x||z|}t^{i+j}[\alpha(x),[y,z]_{i}]_{j}=0,
\end{equation*}
i.e.
\begin{equation*}
    \circlearrowleft_{x,y,z} \sum_{i\geq0,s\geq0}(-1)^{|x||z|}t^{s}[\alpha(x),[y,z]_{i}]_{s-i}=0,
\end{equation*}
or
\begin{equation*}
\sum_{s\geq0}  t^s   \circlearrowleft_{x,y,z}\sum_{i\geq0}(-1)^{|x||z|}t^{s}[\alpha(x),[y,z]_{i}]_{s-i}=0. 
\end{equation*}
The deformation equation  is equivalent to the following infinite system
\begin{equation}
  \circlearrowleft_{x,y,z}\sum_{i=0}^{s}(-1)^{|x||z|}[\alpha(x),[y,z]_{i}]_{s-i}=0,\ \textrm{for}\ s=0,1,2,\cdots \label{defordelta}
\end{equation}
In particular, For $s=0$ we have $\circlearrowleft_{x,y,z}(-1)^{|x||z|}[\alpha(x),[y,z]_{0}]_{0}=0$ which is the super Hom-Jacobi identity of $\mathcal{G}$.\\
The equation for $s=1$, is equivalent  to $\delta^{2}([.,.]_1)=0.$ Then $[.,.]_1$ is a $2$-cocycle ($[.,.]_1\in Z^{2}(\mathcal{G},\mathcal{G}$). We deal here with $\mathcal{G}$-valued cohomology.
For $s\geq2$, the identities (\ref{defordelta}) are equivalent to:
\begin{equation*}
   \delta^{2}([.,.]_{s}) (x,y,z)=-\circlearrowleft_{x,y,z}\sum_{i=1}^{s-1}(-1)^{|x||z|}[\alpha(x),[y,z]_{i}]_{s-i}.
\end{equation*}

Let $( \mathcal{G},[.,.],\alpha)$ be a  Hom-Lie superalgebra and $[.,.]_{1}$ be an element of $Z^{2}(\mathcal{G},\mathcal{G})$. The $2$-cocycle
$[.,.]_{1}$ is said to be integrable if there exists a family $([.,.]_{p})_{p\geq0}$ such that $\displaystyle [.,.]_{t}=\sum_{i\geq0}t^i[.,.]_{i}$ defines a formal deformation $\mathcal{G}_{t}=( \mathcal{G},[.,.]_{t},\alpha)$  of $ \mathcal{G}$.\\

One may also prove:
\begin{thm}
Let $( \mathcal{G},[.,.],\alpha)$ be a  Hom-Lie superalgebra and  $\mathcal{G}_{t}=( \mathcal{G},[.,.]_{t},\alpha)$  be a one-parameter formal deformation of $\mathcal{G}$, where $\displaystyle [.,.]_{t}=\sum_{i\geq0}t^i[.,.]_{i}$. Then there exists an equivalent deformation  $(\mathcal{G}_{t}'= \mathcal{G},[.,.]'_{t},\alpha)$, where
$\displaystyle [.,.]_{t}'=\sum_{i\geq0}t^i[.,.]_{i}'$ such that $[.,.]_{1}'\in Z^{2}(\mathcal{G},\mathcal{G})$ and doesn't belong to $B^{2}(\mathcal{G},\mathcal{G})$.\\

Hence, if $H^2(\mathcal{G},\mathcal{G})=0$ then every formal deformation is equivalent to a trivial deformation. The Hom-Lie superalgebra is called rigid.
\end{thm}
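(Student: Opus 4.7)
The plan is to adapt Gerstenhaber's classical argument to the Hom-Lie superalgebra setting: iteratively eliminate coboundary contributions at the leading deformation order by suitable formal changes of variables, and conclude rigidity when every obstructing cocycle is automatically exact. I would start from the observation already made in the discussion of (\ref{defordelta}): the order-$1$ equation forces $[.,.]_1 \in Z^2(\mathcal{G},\mathcal{G})$. The non-trivial case is when $[.,.]_1 \in B^2(\mathcal{G},\mathcal{G})$, i.e.\ $[.,.]_1 = \delta^1(\psi)$ for some even $\psi \in C^1(\mathcal{G},\mathcal{G})$.

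To kill such a coboundary, I would introduce the formal automorphism $\phi_t = \id_{\mathcal{G}} - t\psi$, whose formal inverse satisfies $\phi_t^{-1} = \id_{\mathcal{G}} + t\psi + O(t^2)$, and define the equivalent deformation by $[x,y]_t' := \phi_t^{-1}\bigl([\phi_t(x),\phi_t(y)]_t\bigr)$. A direct expansion modulo $t^2$ gives
\begin{equation*}
[x,y]_t' = [x,y] + t\bigl([x,y]_1 + \psi([x,y]) - [\psi(x),y] - [x,\psi(y)]\bigr) + O(t^2),
\end{equation*}
and comparing with formula (\ref{cobound1}) for even $\psi$, the $t$-coefficient is precisely $[.,.]_1 - \delta^1(\psi)$, which vanishes by assumption. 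The resulting deformation therefore starts at order $t^2$ or higher.

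The same argument iterates. Assume one has arranged $[.,.]_1' = \cdots = [.,.]_{k-1}' = 0$. Then the order-$k$ deformation equation (\ref{defordelta}) reduces to $\delta^2([.,.]_k') = 0$, so $[.,.]_k'$ is a $2$-cocycle. If it happens to be a coboundary $\delta^1(\psi_k)$, conjugating by $\phi_t = \id_{\mathcal{G}} - t^k \psi_k$ absorbs the order-$k$ term while leaving everything of lower order intact. Composing these successive equivalences yields a well-defined formal automorphism (the $t^n$-coefficient of the composite stabilises after finitely many steps) producing either an equivalent deformation whose first non-vanishing $[.,.]_i'$ represents a non-trivial class in $H^2(\mathcal{G},\mathcal{G})$, or a trivial deformation. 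Hence, if $H^2(\mathcal{G},\mathcal{G})=0$, every encountered cocycle is a coboundary, the iteration terminates in the trivial deformation, and $\mathcal{G}$ is formally rigid.

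The main technical obstacle is the careful bookkeeping of $\Z_2$-graded signs in the expansion above: one must verify, for an even $\psi$, that the super-antisymmetry of the bracket combined with $|\psi|=0$ turns $\psi([x,y]) - [\psi(x),y] - [x,\psi(y)]$ into exactly $-\delta^1(\psi)(x,y)$ as given by (\ref{cobound1}). The requirement that $\psi$ be even is forced by the hypothesis that $[.,.]_t$ is an even bilinear map, so only even formal automorphisms implement equivalences. Once this super-sign computation is settled, the inductive step at order $t^k$ is formally identical.
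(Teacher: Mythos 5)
The paper states this theorem without proof (it is introduced by ``One may also prove:''), so there is no argument of the authors to compare yours against; your proposal is the standard Gerstenhaber argument transported to the super/Hom setting, and in outline it is correct. The sign computation you flag as the main technical point does work out: for even $\psi$ one has $|\psi(x)|=|x|$, so super-antisymmetry turns the last term of (\ref{cobound1}) into $+[\psi(x),y]$, giving $\delta^1(\psi)(x,y)=-\psi([x,y])+[x,\psi(y)]+[\psi(x),y]$, and hence the $t$-coefficient of $\phi_t^{-1}\bigl([\phi_t(x),\phi_t(y)]_t\bigr)$ is exactly $[x,y]_1-\delta^1(\psi)(x,y)$. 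The inductive step, using that (\ref{defordelta}) at order $k$ collapses to $\delta^2([.,.]_k')=0$ once $[.,.]_1'=\cdots=[.,.]_{k-1}'=0$, and the stabilisation of the composite automorphism, are routine and correctly handled.

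Two points deserve to be made explicit. First, for the conjugated bracket to be a deformation of the \emph{same} Hom-Lie superalgebra, i.e.\ to satisfy the Hom-Jacobi identity (\ref{djacobie}) with the unchanged twist $\alpha$, you need $\phi_t$ to commute with $\alpha$, hence $\psi\circ\alpha=\alpha\circ\psi$ at every stage. This holds automatically if $\psi$ is taken in the hom-cochain space $C^1_{\alpha,\alpha}(\mathcal{G},\mathcal{G})$, i.e.\ if $B^2(\mathcal{G},\mathcal{G})$ is understood as the coboundaries of the complex $\bigl(\oplus_k C^k_{\alpha,\alpha}(\mathcal{G},\mathcal{G}),\delta^k\bigr)$; since the paper's displayed operators (\ref{cobound1})--(\ref{cobound2}) are written on all of $C^k(\mathcal{G},\mathcal{G})$, you should state this compatibility assumption rather than leave it implicit. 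Second, the theorem's literal conclusion that $[.,.]_1'$ ``doesn't belong to $B^2(\mathcal{G},\mathcal{G})$'' cannot hold when the deformation is trivial (all $[.,.]_i'$ vanish, and $0\in B^2$); your reading --- iterate until the first nonvanishing coefficient represents a nontrivial class, or else conclude triviality --- is the correct formulation and is precisely what the rigidity consequence requires.
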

 
\section{  The  $q$-Witt superalgebras}
Let $\mathcal{A}=\mathcal{A}_{0}\oplus\mathcal{A}_{1}$ be an associative superalgebra. We assume that $\mathcal{A}$ is super-commutative, that is for homogeneous elements $a, b$, the identity $ab=(-1)^{|a||b|}ba$ holds.  

\begin{defn}
A $\sigma$-derivation $D_{i}$ ($i\in \mathbb{Z}_{2}$) on $ \mathcal{A}$ is an endomorphism satisfying:
$$D_{i}(ab)=D_{i}(a)b+(-1)^{i|a|}\sigma (a)D_{i}(b),$$
where $ a, b\ \in \mathcal{A}$ are homogeneous element and $|a|$ is the parity of $a$.\\
A $\sigma$-derivation $D_0$ is said to be an  even $\sigma$-derivation  and $D_1$ is an  odd $\sigma$-derivation. The set of
all $\sigma$-derivations is denoted by $Der_{\sigma}( \mathcal{A})$. Therefore, $Der_{\sigma}(\mathcal{A})=Der_{\sigma}(\mathcal{A})_{0}\oplus Der_{\sigma}(\mathcal{A})_{1}$, where
$Der_{\sigma}(\mathcal{A})_{0}$ (resp $Der_{\sigma}(\mathcal{A})_{1}$) is the space of even (resp. odd) $\sigma$-derivations.
\end{defn}

Let $\mathcal{A}=\mathcal{A}_{0}\oplus\mathcal{A}_{1}$ be a super-commutative  associative superalgebra,  such $\mathcal{A}_{0}=\mathbb{C}[t,t^{-1}]$ and $\mathcal{A}_{1}=\theta\mathcal{A}_{0}$ where $\theta$ is the Grassman variable $(\theta^{2}=0).$
We set $\{n\}=\frac{1-q^{n}}{1-q}, $ a $q$-number, where $q\in \mathbb{C}\backslash \{0,1\}$ and $n\in \mathbb{N}$.
Let $\sigma$ be the algebra endomorphism on $\mathcal{A}$ defined by  $$\sigma(t^{n})=q^{n}t^{n} \ \ and \ \ \sigma (\theta)=q\theta.$$
 
Let $\partial_{t}$ \ and \ $\partial_{\theta}$\ be two linear maps on $\mathcal{A}$ defined by $$\partial_{t}(t^{n})=\{n\}t^{n}, \ \partial_{t}(\theta t^{n})=\{n\}\theta t^{n},$$ $$\partial_{\theta}(t^{n})=0, \ \ \partial_{\theta}(\theta t^{n})=q^{n}t^{n}.$$

\begin{lem}\cite{AmmarMakhloufJA2010}
The linear map $\Delta=\partial_{t}+\theta \partial_{\theta}$ on  $ \mathcal{A}$ is an even $\sigma$-derivation.\\Hence, $\Delta(t^{n})=\{n\}t^{n}$ and 
$\Delta(\theta t^{n})=\{n+1\}\theta t^{n}.$
\end{lem}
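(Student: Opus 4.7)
The claim is that $\Delta=\partial_t+\theta\partial_\theta$ is an even $\sigma$-derivation with the stated values on monomials. The plan is to verify the Leibniz-type identity
\[
\Delta(ab)=\Delta(a)b+\sigma(a)\Delta(b)
\]
on a basis of $\mathcal{A}$ (namely on the monomials $t^m,\ \theta t^m$ for $m\in\mathbb{Z}$), after first noting that $\Delta$ is even and computing its explicit values on these monomials.

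First I would check evenness: $\partial_t$ preserves the $\mathbb{Z}_2$-grading by definition, and $\theta\partial_\theta$ decreases the $\theta$-degree by one and then multiplies by $\theta$, so it also preserves parity. The values on monomials follow directly from the definitions: $\Delta(t^n)=\partial_t(t^n)+\theta\partial_\theta(t^n)=\{n\}t^n$, and
\[
\Delta(\theta t^n)=\{n\}\theta t^n+\theta\cdot q^n t^n=(\{n\}+q^n)\theta t^n=\{n+1\}\theta t^n,
\]
using the key $q$-number identity $\{n\}+q^n=\{n+1\}$, which is immediate from $\{n\}=\frac{1-q^n}{1-q}$.

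Next I would verify the Leibniz rule case-by-case on pairs of basis elements. The essential algebraic ingredient is the generalized identity $\{m\}+q^m\{n\}=\{m+n\}$ (and its shifted version $\{m\}+q^m\{n+1\}=\{m+n+1\}$), obtained by the same telescoping of $q$-powers. The four cases are:
\begin{itemize}
\item $a=t^m$, $b=t^n$: both sides equal $\{m+n\}t^{m+n}$.
\item $a=t^m$, $b=\theta t^n$: both sides equal $\{m+n+1\}\theta t^{m+n}$.
\item $a=\theta t^m$, $b=t^n$: both sides equal $\{m+n+1\}\theta t^{m+n}$.
\item $a=\theta t^m$, $b=\theta t^n$: both sides vanish because $\theta^2=0$.
\end{itemize}
Each case reduces to one of the two $q$-identities above.

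There is no real obstacle; the only subtlety is that because $\Delta$ is \emph{even} ($i=0$), the parity sign $(-1)^{i|a|}$ in the definition of a $\sigma$-derivation is trivially $+1$, so no signs complicate the computation even when $a$ is odd. The verification then extends by $\mathbb{C}$-bilinearity from the basis to all of $\mathcal{A}$, completing the proof.
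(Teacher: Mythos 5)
Your proof is correct. Note that the paper itself gives no argument for this lemma --- it is quoted from \cite{AmmarMakhloufJA2010} --- so there is nothing to compare against line by line; your direct verification on the monomial basis, resting on the telescoping identity $\{m\}+q^{m}\{n\}=\{m+n\}$ and the vanishing of the $\theta^{2}$ terms, is exactly the standard computation one would expect, and your observation that the sign $(-1)^{i|a|}$ is trivial because $\Delta$ is even is the right point to flag.
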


Let $\mathcal{W}^q= \mathcal{A}\cdot\Delta$ be a superspace generated by  elements $L_{n}=t^{n}\cdot \Delta$ of parity $0$ and  elements $G_{n}=\theta t^{n}\cdot\Delta$ of parity 1.\\
Let $[-,-]$ be a bracket on the superspace $\mathcal{W}^{q}$ defined by
\begin{eqnarray}
&&[L_{n},L_{m}]=(\{m\}-\{n\})L_{n+m}, \label{crochet1} \\
&&[L_{n},G_{m}]=(\{m+1\}-\{n\})G_{n+m}.\label{crochet2}
 \end{eqnarray}
 The others brackets are obtained by supersymmetry or equals $0.$\\
It is easy to see that $\mathcal{W}^{q}$ is a $\mathbb{Z}-$graded algebra $
 \mathcal{W}^{q}=\oplus_{n\in\mathbb{Z}}\mathcal{W}^{q}_n,
$ 
 where
$
 \mathcal{W}^{q}_{n}=span_{\mathbb{C}} \{L_{n}, G_{n}\}
$. The elements $L_n$ and $G_n$ are said of degree $n$.

Let $\alpha$ be an even linear map on $\mathcal{W}^{q}$ defined on the generators by
 \begin{equation}\label{alpha1}
    \alpha(L_{n})=(1+q^{n})L_{n},
 \end{equation}
 \begin{equation}\label{alpha2}
   \alpha(G_{n})=(1+q^{n+1})G_{n}.
 \end{equation}
\begin{prop}\cite{AmmarMakhloufJA2010}
The triple $(\mathcal{W}^{q},[-,-],\alpha)$ is a Hom-Lie superalgebra.
\end{prop}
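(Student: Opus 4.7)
The plan is to verify the two defining axioms of a Hom-Lie superalgebra directly on the generators $L_n$ and $G_n$. Super skew-symmetry is immediate from the defining formulas: the bracket $[L_n,L_m]=(\{m\}-\{n\})L_{n+m}$ is manifestly antisymmetric in $n,m$, and the convention ``others obtained by supersymmetry or equal $0$'' fixes $[G_m,L_n]=-[L_n,G_m]$ and $[G_n,G_m]=0$, both of which are tautologically super-symmetric. Since $\alpha$ is prescribed to be even and linear, the only real content is the super Hom-Jacobi identity.

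By multilinearity it suffices to evaluate the cyclic sum on triples of generators, giving four parity patterns up to permutation: $(L,L,L)$, $(L,L,G)$, $(L,G,G)$ and $(G,G,G)$. I would dispatch the two cases with at least two $G$'s first: since $[G_i,G_j]=0$, every term of the cyclic sum in the Hom-Jacobi identity reduces either to $[\alpha(G_{\cdot}),0]$ or to an outer bracket of the form $[\alpha(G_{\cdot}),G_{\cdot}]$, and so vanishes. Thus the Hom-Jacobi identity holds trivially for these two parity patterns.

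The substantive case is $(L_l,L_m,L_n)$. After substituting the formulas for $\alpha$ and the bracket on $L$'s, factoring out $L_{l+m+n}/(1-q)^2$, and setting $x=q^l$, $y=q^m$, $z=q^n$ together with $\{k\}=(1-q^k)/(1-q)$, the Hom-Jacobi identity reduces to the polynomial identity
\[
(1+x)(y-z)(x-yz) + (1+z)(x-y)(z-xy) + (1+y)(z-x)(y-zx) = 0,
\]
which follows by expanding and checking that the twelve resulting monomials cancel pairwise. For the remaining case $(L_l,L_m,G_n)$ no new computation is needed: because $[L,G]$ and $\alpha(G_n)$ involve precisely $\{n+1\}$ and $1+q^{n+1}$, the scalar coefficient of $G_{l+m+n}$ in the Hom-Jacobi sum is exactly what one obtains from the $(L,L,L)$ coefficient by the substitution $n\mapsto n+1$ (after using $[G,L]=-[L,G]$ to absorb three sign flips). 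Hence it vanishes too.

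The main obstacle is not mathematical but bookkeeping: keeping straight the sign factors $(-1)^{|x||z|}$ appearing in the super Hom-Jacobi identity, the minus signs produced by skew-symmetry when rewriting brackets such as $[G_m,L_l]$ as $-[L_l,G_m]$, and the auxiliary $q$-number identities $\{n+m\}=\{n\}+q^n\{m\}$ and $q^m\{n\}-q^n\{m\}=\{n\}-\{m\}$, which are really what drives the polynomial identity above. None of the steps is deep; the argument boils down to a single symmetric polynomial identity plus careful case analysis.
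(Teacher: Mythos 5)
Your verification is correct: super skew-symmetry is built into the definition of the bracket, the cases with at least two odd generators collapse because $[G_i,G_j]=0$, the $(L,L,L)$ case reduces (via $\{a\}-\{b\}=\frac{q^b-q^a}{1-q}$ and $\{a+b\}=\{a\}+q^a\{b\}$) to the stated symmetric polynomial identity in $x=q^l$, $y=q^m$, $z=q^n$, whose twelve monomials do cancel in pairs, and the $(L,L,G)$ case is indeed the same computation with $n\mapsto n+1$ because $\alpha(G_n)$ and $[L_m,G_n]$ carry $1+q^{n+1}$ and $\{n+1\}$ (only two brackets, not three, need reversing via $[G,L]=-[L,G]$, but each reversal is absorbed into flipping a difference $\{a\}-\{b\}$, so the conclusion stands). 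The paper itself gives no proof, citing the result from an earlier reference, and the direct generator-by-generator check you carry out is the standard argument for it.
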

In the sequel we refer to this Hom-Lie superalgebras as $\mathcal{W}^{q}$. We call it $q$-deformed Witt superalgebra.
\section{Second cohomology $H^{2}(\mathcal{W}^{q},\mathcal{W}^{q})$}
In this section, we aim to compute the second cohomology group  of $\mathcal{W}^{q}$ with values in itself. 
For all  $q$-deformed 1-cocycle (resp 2-cocycle) on $\mathcal{W}^q$ we have  with respect to \eqref{cobound1},\eqref{cobound2},  respectively
\begin{eqnarray}\label{ddd1}
 0&=& \delta^{1}(f)(x_{0},x_{1})\nonumber\\
 &=&-f([x_{0},x_{1}])+(-1)^{|x_{0}||f|} [x_{0},f(x_{1})]-(-1)^{|x_{1}|(|f|+|x_{0}|} [x_{1},f(x_{0})],
\end{eqnarray}
\begin{eqnarray}
0&=&\delta^{2}(f)(x_{0},x_{1},x_{2})\nonumber\\
&=&-f([x_{0},x_{1}],\alpha(x_{2}))+(-1)^{|x_{2}||x_{1}|}f([x_{0},x_{2}],\alpha(x_{1}))+f(\alpha(x_{0}),[x_{1},x_{2}])
\nonumber\\
&&+(-1)^{|x_{0}||f|}[\alpha(x_{0}),f(x_{1},x_{2})] -(-1)^{|x_{1}|(|f|+|x_{0}|)} [\alpha(x_{1}),f(x_{0},x_{2})]\nonumber\\
&&+(-1)^{|x_{2}|(|f|+|x_{0}|+|x_{1}|)}[\alpha(x_{2}),f(x_{0},x_{1})]. \ \ \ \, \label{ddd}
\end{eqnarray}

Taking the pair $(x, y)$ to be respectively   $(L_{n}, L_{p})$ and $ (L_{n},G_{p})$
in (\ref{ddd1}), we obtain
\begin{eqnarray}
\begin{cases}
 \delta^{1}(f)(L_{n},L_{p})=-f([L_{n}, L_{p}])
+[L_{n},f(L_{p})]-[L_{p},f(L_{n}]=0.\label{1cochain}
\\
\delta^{1}(f)(L_{n},G_{p})=-f([L_{n}, G_{p}])
+[L_{n},f(G_{p})]-(-1)^{|f|}[G_{p},f(L_{n})]=0.\label{1cochain2}
\end{cases}
\end{eqnarray}

Taking the triple $(x, y, z)$ to be $(L_{n}, L_{m}, L_{p}),\ (L_{n}, L_{m},G_{p}),\  and \ (L_{n},G_{m} ,G_{p})$
in (\ref{ddd}), respectively, we obtain
\begin{eqnarray}
 0&=&-f([L_{n}, L_{m}],\alpha(L_{p}))+f([L_{n}, L_{p}],\alpha(L_{m}))+f(\alpha(L_{n}),[L_{m}, L_{p}])\nonumber \\
&&+[\alpha(L_{n}),f(L_{m},L_{p})]-[\alpha(L_{m}),f(L_{n},L_{p})]+[\alpha(L_{p}),f(L_{n},L_{m})].\label{pair}
\\
 0&=&-f([L_{n}, L_{m}],\alpha(G_{p}))+f([L_{n}, G_{p}],\alpha(L_{m}))+f(\alpha(L_{n}),[L_{m}, G_{p}])\nonumber\\
&&+[\alpha(L_{n}),f(L_{m},G_{p})]-[\alpha(L_{m}),f(L_{n},G_{p})]+(-1)^{|f|}[\alpha(G_{p}),f(L_{n},L_{m})].\label{pair2}
\\
0&=&-f([L_{n}, G_{m}],\alpha(G_{p}))-f([L_{n}, G_{p}],\alpha(G_{m}))+f(\alpha(L_{n}),[G_{m}, G_{p}])\nonumber\\
&&+[\alpha(L_{n}),f(G_{m},G_{p})]-(-1)^{|f|}[\alpha(G_{m}),f(L_{n},G_{p})]-(-1)^{|f|}[\alpha(G_{p}),f(L_{n},G_{m})].\label{pair3}
\end{eqnarray}

Our main theorem is 
 
\begin{thm}
The second cohomology group of  $q$-deformed Witt superalgebras $W^{q}$ with values in the adjoint module vanishes, i.e.
$$ H^{2}(\mathcal{W}^{q},\mathcal{W}^{q})=\{0\}.$$
Hence, every formal deformation is equivalent to a trivial deformation. 
\end{thm}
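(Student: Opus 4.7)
I will exploit the two compatible gradings on $\mathcal{W}^q$: the $\mathbb{Z}_{2}$-grading (parity) and the $\mathbb{Z}$-grading $\mathcal{W}^q=\bigoplus_{n\in\mathbb{Z}}\mathcal{W}^q_n$. Both the bracket and the twist $\alpha$ are $\mathbb{Z}$-homogeneous of degree zero, so the coboundary operators $\delta^1$ and $\delta^2$ preserve the induced $\mathbb{Z}$-weight on cochains. Consequently $H^2(\mathcal{W}^q,\mathcal{W}^q) = H^2_0 \oplus H^2_1$, and each summand further splits by weight: every cocycle $f$ decomposes as $f = \sum_{s\in\mathbb{Z}} f_s$ with $f_s$ a homogeneous cocycle of weight $s$. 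It therefore suffices to show $f_s \in B^2(\mathcal{W}^q,\mathcal{W}^q)$ for each parity $\epsilon\in\{0,1\}$ and each $s$.

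For a fixed pair $(\epsilon,s)$ such an $f$ is determined by scalar families $a_{n,m},\ b_{n,m},\ c_{n,m}$ via
\begin{align*}
f(L_n,L_m) &= a_{n,m}\,\eta^{\epsilon}_{n+m+s}, \\
f(L_n,G_m) &= b_{n,m}\,\eta^{\epsilon+1}_{n+m+s}, \\
f(G_n,G_m) &= c_{n,m}\,\eta^{\epsilon}_{n+m+s},
\end{align*}
with the convention $\eta^0_k = L_k$, $\eta^1_k = G_k$, together with the induced super-symmetries $a_{n,m}=-a_{m,n}$ and $c_{n,m}=c_{m,n}$. The hom-cochain condition $\alpha\circ f = f\circ(\alpha\otimes\alpha)$, read off from (\ref{alpha1})--(\ref{alpha2}), forces scalar identities such as $(1+q^{n+m+s})\,a_{n,m} = (1+q^n)(1+q^m)\,a_{n,m}$ and the analogous ones for $b$ and $c$; for generic $q$ the algebraic equation $1+q^{n+m+s}=(1+q^n)(1+q^m)$ is satisfied by at most finitely many pairs $(n,m)$, so most coefficients are immediately eliminated at each weight.

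I would then substitute this parametrization into the three super-cocycle identities (\ref{pair}), (\ref{pair2}), (\ref{pair3}), which become linear recurrences in $a,b,c$ whose coefficients are the $q$-numbers from (\ref{crochet1})--(\ref{crochet2}). In parallel I parametrize a weight-$s$, parity-$\epsilon$ $1$-cochain $g$ by $g(L_n) = \phi_n\,\eta^{\epsilon}_{n+s}$ and $g(G_n) = \psi_n\,\eta^{\epsilon+1}_{n+s}$, compute $\delta^1 g$ from (\ref{cobound1}), and solve for $\phi_n,\psi_n$ so that $\delta^1 g = f$. The expected main obstacle is the sporadic set of triples $(n,m,s)$ where the recurrence degenerates because $\{m\}-\{n\}$ or $\{m+1\}-\{n\}$ vanishes, or where the $q$-power relation above holds and does not automatically kill the corresponding coefficient; there one must combine a second specialization of (\ref{pair})--(\ref{pair3}) with permuted arguments together with the residual $\alpha$-compatibility to fix the remaining constants by choosing the free parameters of $g$ appropriately. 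After carrying out this bookkeeping separately in the even and odd parity sectors, every homogeneous $f_s$ is exhibited as a coboundary, giving $H^2(\mathcal{W}^q,\mathcal{W}^q)=0$; the rigidity conclusion then follows from the deformation-theoretic criterion recalled in the preliminaries.
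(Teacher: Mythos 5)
Your overall architecture --- splitting by parity and by $\mathbb{Z}$-weight, parametrizing a homogeneous $2$-cochain by scalar families $a_{n,m},b_{n,m},c_{n,m}$, turning the cocycle identities \eqref{pair}--\eqref{pair3} into recurrences, and exhibiting an explicit $1$-cochain $g$ with $\delta^1 g=f$ --- is exactly the paper's strategy. But there is a genuine gap in the step you are counting on to do most of the work: the condition $\alpha\circ f=f\circ(\alpha\otimes\alpha)$. The adjoint cohomology computed here is defined on the unrestricted cochain spaces $C^k(\mathcal{G},\mathcal{G})$ via \eqref{cobound1}--\eqref{cobound2}; no $\alpha$-compatibility is imposed, and none can be, because the deformation-theoretic application requires the first-order term $[.,.]_1$ of an arbitrary formal deformation (an arbitrary even bilinear map) to be a $2$-cocycle in this complex. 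Worse, if you did impose it, the scalar identity $1+q^{n+m+s}=(1+q^{n})(1+q^{m})$, i.e.\ $q^{-m}+q^{-n}+1=q^{s}$, has \emph{no} integer solutions for generic $q$, so your restricted cochain space would be zero and the theorem would become vacuous --- a sign that this is not the right complex. So the coefficients are not ``mostly eliminated'': you must carry out the full analysis at every weight $s$.

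Once that crutch is removed, the remaining plan is correct in outline but omits the hard part. The clean construction $g(L_p)=\tfrac{1}{q^{p}\{s\}}f(L_0,L_p)$, $g(G_p)=\tfrac{1}{q^{p+1}\{s\}}f(L_0,G_p)$ (and its odd analogue with $\{s\pm1\}$) kills $f$ outright only when the relevant $q$-integers are nonzero; it degenerates precisely at $s=0,2$ in the even sector and $s=\pm1$ in the odd sector. Those four weights are where essentially all of the work lies: one must first normalize $f$ by a coboundary so that $f(L_n,L_1)$, $f(L_{-1},L_2)$, $f(L_1,G_m)$, $f(L_{-1},G_1)$ vanish, and then run the recurrences case by case in the level $k$ (including resolving sporadically undetermined coefficients such as $a_{2,-2}$ or $b_{1,0}$ by substituting carefully chosen index triples like $(2,-2,4)$ into \eqref{pair}). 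Your proposal acknowledges ``sporadic degenerations'' but gives no concrete mechanism for identifying the exceptional weights or closing these cases, so as written it does not yet constitute a proof.
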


In the sequel we proceed to prove this result by computing the second even and odd  cohomology groups. We have 
$$H^{2}(\mathcal{W}^{q},\mathcal{W}^{q})=H_0^{2}(\mathcal{W}^{q},\mathcal{W}^{q})\oplus H_1^{2}(\mathcal{W}^{q},\mathcal{W}^{q})$$
where $H_0^{2}(\mathcal{W}^{q},\mathcal{W}^{q})$ (resp. $H_1^{2}(\mathcal{W}^{q},\mathcal{W}^{q})$) is the  even (resp. odd) subspace.

\subsection{Second even cohomology $H^{2}_{0}(\mathcal{W}^{q},\mathcal{W}^{q})$}
We denote by $H^{2}_{0,s}(\mathcal{W}^{q},\mathcal{W}^{q})$ the even subspace  of degree $s$. That is given by even  $2$-cochains    of degree $s$, i.e.    for all homogeneous elements $x_1,x_2\in\mathcal{W}^{q}$ of degree respectively $m,n$, $f(x_1, x_2)$ is  of degree $m+n+s$.\\

 Assume now that $f$ is an even $2$-cocycle of degree $s$. We  set
\begin{equation*}
   f(L_n,L_p)=a_{s,n,p}L_{s+n+p},\  f(L_n,G_p)=b_{s,n,p}G_{s+n+p} \textrm{ and} f(G_n,G_p)=c_{s,n,p}L_{s+n+p}.
\end{equation*}
When there is no ambiguity with the degree $s$, the coefficients 
$a_{s,n,p},\  b_{s,n,p},\ c_{s,n,p}$ are denoted by $a_{n,p},\  b_{n,p},\ c_{n,p}$.

By (\ref{pair}), we have
\begin{eqnarray}\label{pairds}
0&=&-(1+q^{p})(\{m\}-\{n\})a_{n+m,p}+(1+q^{m})(\{p\}-\{n\})a_{n+p,m}\nonumber \\
&&+(1+q^{n})(\{p\}-\{m\})a_{n,m+p}
+(1+q^{n})(\{m+p+s\}-\{n\}) a_{m,p} \\
&& - (1+q^{m})(\{p+n+s\}-\{m\})a_{n,p}
+ (1+q^{p})(\{n+m+s\}-\{p\})a_{n,m}.\nonumber
\end{eqnarray}
Therefore by  (\ref{pair2}), we obtain
\begin{eqnarray}\label{pairdsd}
0&=&-(1+q^{p+1})(\{m\}-\{n\})b_{n+m,p}-(1+q^{m})(\{p+1\}-\{n\})b_{m,n+p}\nonumber \\
&&+(1+q^{n})(\{p+1\}-\{m\})b_{n,m+p}
+(1+q^{n})(\{m+p+1+s\}-\{n\}) b_{m,p} \\
&& - (1+q^{m})(\{p+n+1+s\}-\{m\})b_{n,p}
- (1+q^{p+1})(\{p+1\}-\{n+m+s\})a_{n,m}.\nonumber
\end{eqnarray}

Since $[G_{n},G_{m}]=0$, by (\ref{pair3}), we obtain
\begin{eqnarray}
0&=&-(1+q^{p+1})(\{m+1\}-\{n\})c_{n+m,p}-(1+q^{m+1})(\{p+1\}-\{n\})c_{n+p,m}  \nonumber \\
&&+(1+q^{n})(\{m+p+s\}-\{n\}) c_{m,p}.
 \label{paird}
\end{eqnarray}

\begin{prop}\label{lemma}
If $s\neq 0,\ 2$, the subspaces $H^{2}_{0,s}(\mathcal{W}^{q},\mathcal{W}^{q})$ 
 vanishe.
\end{prop}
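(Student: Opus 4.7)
The strategy is to show that every even 2-cocycle $f$ of degree $s\neq 0,2$ is a coboundary by explicitly constructing a 1-cochain $g\in C^1(\mathcal{W}^q,\mathcal{W}^q)$ of degree $s$ with $f=\delta^1 g$. Take the ansatz $g(L_n)=\mu_n L_{n+s}$, $g(G_n)=\nu_n G_{n+s}$; expanding (\ref{cobound1}) writes the coefficients $a'_{n,p},b'_{n,p},c'_{n,p}$ of $\delta^1 g$ as explicit linear expressions in the $\mu$'s and $\nu$'s, and the task reduces to solving $a=a'$, $b=b'$, $c=c'$.

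The first step is to specialise each of the cocycle relations (\ref{pairds}), (\ref{pairdsd}), (\ref{paird}) at $n=0$. Using the elementary $q$-number identities
\[
(1+q^a)\{b\}+(1+q^b)\{a\}=2\{a+b\},\qquad \{k+s\}-\{k\}=q^k\{s\},
\]
those specialisations collapse to relations in which the coefficients of $a_{m,p}$, $b_{m,p}$ and $c_{m,p}$ become $2q^{m+p}\{s\}$, $2q^{m+p+1}\{s\}$ and $2q^{m+p+2}\{s-2\}$ respectively. For generic $q$ the first two prefactors are nonzero precisely when $s\neq 0$ and the third when $s\neq 2$; under the hypothesis $s\neq 0,2$ the $n=0$ cocycle equations therefore become explicit formulas recovering the full arrays $(a_{m,p}),(b_{m,p})$ from the restricted data $(a_{0,k})_k$ and $(b_{0,k})_k$, while forcing $c_{m,p}\equiv 0$.

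The second step is then almost forced: set $\mu_0=0$, $\mu_p=a_{0,p}/(q^p\{s\})$ for $p\neq 0$, and $\nu_p=b_{0,p}/(q^{p+1}\{s\})$. A direct application of (\ref{cobound1}) then gives $\delta^1 g(L_0,L_p)=a_{0,p}L_{p+s}$, $\delta^1 g(L_0,G_p)=b_{0,p}G_{p+s}$, and $\delta^1 g(G_n,G_p)=\nu_p[G_n,G_{p+s}]+\nu_n[G_p,G_{n+s}]=0$. Hence $f-\delta^1 g$ is an even degree-$s$ cocycle with vanishing $(L_0,L_p)$-, $(L_0,G_p)$- and $(G_n,G_p)$-components; applying the determination result of step~1 to this cocycle forces every remaining coefficient to vanish, so $f=\delta^1 g\in B^2$.

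The only subtle point is the coupling between the three cocycle equations: the specialisation of (\ref{pairdsd}) at $n=0$ contains an $a_{0,m}$-term, so the $\nu$'s can be chosen freely only after the $\mu$'s have already annihilated the $a_{0,\bullet}$-contributions. Aside from this ordering, the proof is a matter of routine $q$-number bookkeeping; the main place where one must be careful is checking that the two quoted $q$-number identities really make the $a_{m,p}$-, $b_{m,p}$- and $c_{m,p}$-coefficients in the $n=0$ specialisations collapse to $2q^{m+p}\{s\}$, $2q^{m+p+1}\{s\}$ and $2q^{m+p+2}\{s-2\}$, since a miscalculation there is what would spuriously enlarge the list of exceptional values of $s$.
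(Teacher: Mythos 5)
Your proposal is correct and is essentially the paper's own argument: the paper likewise builds the $1$-cochain from the row $f(L_0,\cdot)$ via $g(L_p)=\tfrac{1}{q^p\{s\}}f(L_0,L_p)$, $g(G_p)=\tfrac{1}{q^{p+1}\{s\}}f(L_0,G_p)$ (your $\mu_p,\nu_p$), and then uses the $m=0$ (equivalently, by antisymmetry, $n=0$) specializations of \eqref{pairds}, \eqref{pairdsd}, \eqref{paird}, where the prefactors collapse to $2q^{n+p}\{s\}$, $2q^{n+p+1}\{s\}$ and $2q^{m+p+2}\{s-2\}$ exactly as you state, to kill $h=f-\delta^1g$. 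The only difference is cosmetic (you state the ``row-$0$ determines everything'' step as a lemma before subtracting the coboundary, while the paper subtracts first), so there is nothing further to add.
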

\begin{proof}
We define an endomorphism $g$ of $\mathcal{W}^{q}$ by
\begin{equation*}
   g(L_{p})=\frac{1}{q^{p}\{s\}}f(L_{0},L_{p})  \textrm{ and} \ g(G_{p})=\frac{1}{q^{p+1}\{s\}}f(L_{0},G_{p}).
\end{equation*}
By (\ref{1cochain}) and (\ref{crochet1}) we have
\begin{equation*}
\delta^{1}(g)(L_{0},L_{p}) =-\{p\}g(L_{p})+\{p+s\}g(L_{p}).
\end{equation*}
So
\begin{equation*}
\delta^{1}(g)(L_{0},L_{p})=q^{p}\{s\}g(L_{p}).
\end{equation*}
We define a $2$-cocycle $h$ by
\begin{equation*}
h=f-\delta ^{1} (g).
\end{equation*}
Therefore
 \begin{equation}
h(L_{0},L_{p})=f(L_{0},L_{p})-\delta^{1}(g)(L_{0},L_{p})
=q^{p}\{s\}g(L_{p})-q^{p}\{s\}g(L_{p})=0.\label{zero}
\end{equation}
 Taking $m=0$ in (\ref{pair}), with (\ref{crochet1}) and (\ref{alpha1})    we obtain
\begin{eqnarray}
0&=&(1+q^{p+1})\{n\}f(L_{n},L_{p})+2(\{p\}-\{n\})f(L_{n+p},L_{0})+(1+q^{n})\{p\}f(L_{n}, L_{p})\nonumber\\
&&+(1+q^{n})[L_{n},f(L_{0},L_{p})]-2[L_{0},f(L_{n},L_{p})]+(1+q^{p})[L_{p},f(L_{n},L_{0})] \label{mars}.
\end{eqnarray}
Since $h$ is a $2$-cocycle,  we can replace $f$ by $h$ in (\ref{mars}), and using (\ref{zero}) we obtain
\begin{eqnarray*}
0&=&(1+q^{p})\{n\}h(L_{n},L_{p})+(1+q^{n})\{p\}h(L_{n}, L_{p})-2\{s+n+p\}h(L_{n},L_{p}).
\end{eqnarray*}
From this, using  the fact that $s$ is not vanishing, we obtain
\begin{equation}\label{lemma1}
  h(L_{n},L_{p})=0\ \forall \ n,p\in \mathbb{Z}.
\end{equation}
 Since  $g(G_{p})=\frac{1}{q^{p+1}\{s\}}f(L_{0},G_{p})$,  by (\ref{1cochain2}) and (\ref{crochet2}) we have
\begin{equation*}\label{}
  \delta^{1}(g)(L_{0},G_{p})=-\{p+1\}g(G_{p})+\{p+s+1\}g(G_{p})=q^{p+1}\{s\}g(G_{p}).
\end{equation*}
Then
\begin{equation*}
h(L_{0},G_{p})=f(L_{0},G_{p})-\delta^{1}(g)(L_{0},G_{p})
=0.
\end{equation*}
Taking $m=0$ in (\ref{pair2}),\ by (\ref{crochet1}), (\ref{crochet2}), (\ref{alpha1}) and (\ref{alpha2}),  we obtain
\begin{eqnarray*}
&&(1+q^{p})\{n\}f(L_{n},G_{p})+2(\{p+1\}-\{n\})f( G_{p+n},L_{0})+(1+q^{n})\{p+1\}f(L_{n}, G_{p})\\
&&+(1+q^{n})[L_{n},f(L_{0},G_{p})]-2[L_{0},f(L_{n},G_{p})]+(-1)^{|f|}(1+q^{p+1})[G_{p},f(L_{n},L_{0})]=0.
\end{eqnarray*}
Since $h$ is a $2$-cocycle and $h(L_{0},G_{p})=h(L_{0},L_{n})=0$ we can deduce
\begin{eqnarray*}
(1+q^{p+1})\{n\}h(L_{n},G_{p})+(1+q^{n})\{p+1\}h(L_{n},G_{p})-2\{p+n+s+1\}h(L_{n},G_{p})=0,
\end{eqnarray*}
which implies, under the condition $s\neq0 $,  that
\begin{eqnarray}\label{hlemma2}
 &&\ h(L_{n},G_{p})=0.
\end{eqnarray}
Taking $n=0$ in (\ref{pair3}), since $[G_{m},G_{p}]=h(L_{n},G_{p})=h(L_{n},L_{p})=0$ and $h$ is a $2$-cocycle we have
\begin{eqnarray*}
-h([L_{0},G_{m}],\alpha(G_{p}))-h([L_{0},G_{p}],\alpha(G_{m}))+[\alpha(L_{0}),h(G_{m},G_{p})]=0.
\end{eqnarray*}
Then
\begin{equation*}
- (1+q^{p+1})\{m+1\} h( G_{m},G_{p}) - (1+q^{m+1})\{p+1\}h(G_{p},G_{m})+2\{m+p+s\}h(G_{m},G_{p})=0,
\end{equation*}
which implies,      under the condition $s\neq2$,  that
$h( G_{m},G_{p})=0.$\\
It follows that $h\equiv 0$. Hence $f$ is a coboundary.
\end{proof}

\begin{lem}
Let $f$  be an  even $2$-cocycle of degree zero $(s=0)$ and  $g$ be an even  endomorphism  of $\mathcal{W}^{q}$. If $h=f-\delta^{1}(g)$
then $$h(L_{-1},L_2)=0,\ h(L_{-1},G_{1})=0,\ h(L_{n},L_1)=0\  \textrm{and} \   h(L_1,G_m)=0\ \forall n\in \mathbb{Z},\ m\in \mathbb{Z}^{*}.$$
\end{lem}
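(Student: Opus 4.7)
The plan is to construct an explicit even endomorphism $g$ of $\mathcal{W}^q$ preserving the $\mathbb{Z}$-grading (so that $h$ remains of degree zero), of the diagonal form $g(L_n)=\alpha_n L_n$ and $g(G_n)=\beta_n G_n$, with the scalars $\alpha_n,\beta_n$ chosen so as to kill the four listed values of $h$. No cocycle condition on $f$ is needed at this stage; only the explicit formula \eqref{cobound1} for $\delta^1$ is used.

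I would begin with the pairs $(L_n,L_1)$. Using $[L_n,L_1]=(1-\{n\})L_{n+1}$, a direct computation yields
\[
\delta^{1}(g)(L_n,L_1)=(1-\{n\})\bigl(\alpha_1+\alpha_n-\alpha_{n+1}\bigr)L_{n+1}.
\]
Writing $f(L_n,L_1)=a_{n,1}L_{n+1}$, the equation $h(L_n,L_1)=0$ becomes, for $n\neq 1$, the recursion $\alpha_{n+1}=\alpha_1+\alpha_n-a_{n,1}/(1-\{n\})$; the case $n=1$ is automatic by skew-symmetry. Taking $n=0$ forces $\alpha_0=a_{0,1}$, while $\alpha_1$ and $\alpha_2$ remain free parameters and the recursion then determines $\alpha_n$ for $n\leq -1$ and $n\geq 3$. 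Imposing in addition $h(L_{-1},L_2)=0$ amounts, after computing $\delta^1(g)(L_{-1},L_2)$ from $[L_{-1},L_2]=(1+q+q^{-1})L_1$, to a single linear equation in $\alpha_{-1},\alpha_1,\alpha_2$, which I would solve for the still-free parameter $\alpha_2$.

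The odd case runs in parallel. From $[L_1,G_m]=(\{m+1\}-1)G_{m+1}$ one obtains, for $m\neq 0$, a recursion of the shape $\beta_{m+1}=\beta_m+\alpha_1-b_{1,m}/(\{m+1\}-1)$; the excluded case $m=0$ is precisely the one where the coboundary contribution vanishes identically, which explains why the statement restricts to $m\in\mathbb{Z}^{*}$. The recursion determines every $\beta_m$ with $m\notin\{0,1\}$ from two free parameters $\beta_0,\beta_1$, and the condition $h(L_{-1},G_1)=0$ provides one additional linear relation among $\beta_0,\beta_1,\alpha_{-1}$, which pins down $\beta_1$.

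The main obstacle is really only bookkeeping: one must check that no denominator encountered along the way vanishes, namely $1-\{n\}\neq 0$ for $n\neq 1$, $\{m+1\}-1\neq 0$ for $m\neq 0$, and $1+q+q^{-1}\neq 0$. For generic $q\in\mathbb{C}\setminus\{0,1\}$ these all hold, and the construction produces the desired $g$. This lemma is purely a normalisation step; the cocycle identities \eqref{pair}--\eqref{pair3} will be invoked in the propositions that follow to propagate the vanishing of $h$ on these distinguished pairs to all of $\mathcal{W}^q$.
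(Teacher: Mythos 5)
Your construction is essentially the paper's own proof: a diagonal, degree-preserving even endomorphism $g(L_n)=\alpha_n L_n$, $g(G_n)=\beta_n G_n$ whose coefficients satisfy the same recursions in $n$ (the paper merely normalises $\alpha_1=0$ and $\beta_0=0$ where you keep them as free parameters), with the two extra conditions $h(L_{-1},L_2)=0$ and $h(L_{-1},G_1)=0$ used to fix $\alpha_2$ and $\beta_1$ exactly as you describe. Your caveat that the denominators $\{1\}-\{n\}$, $\{m+1\}-\{1\}$ and $\{2\}-\{-1\}$ must be nonzero (automatic when $q$ is not a root of unity) is a genericity assumption the paper leaves implicit, and your explanation of the restriction to $m\in\mathbb{Z}^{*}$ matches the reason the paper's recursion skips $m=0$.
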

\begin{proof}
  Let $f$ be an  even $2$-cocycle. Then $f(L_{n},L_{m})=f_{n,m}L_{n+m}$ and  $f(L_{n},G_{m})=f_{n,m}^{'}G_{n+m}$ .

Let  $(a_{n})_{n\in\mathbb{Z}}$  be  the sequence  given recursively by
  \begin{eqnarray*}
    &&a_{0}=f_{0,1},\\
&&a_{n}=a_{n+1}+\frac{1}{\{1\}-\{n\}} f_{n,1}  \ \ \forall n<0,\\
     &&a_1=0,\\
   &&a_{2} =\frac{1}{\{2\}-\{-1\}}f_{-1,2}-a_{-1},
\\
&&a_{n+1}=a_{n}+\frac{1}{\{n\}-\{1\}} f_{n,1}  \ \ \forall n\geq2.
   \end{eqnarray*}
 Let  $(b_{m})_{m\in\mathbb{Z}}$  be  the sequence  given recursively by
    \begin{eqnarray*}
&&b_0=0,\\
&&b_{m}=b_{m+1}- \frac{1}{\{1\}-\{m+1\}}f_{1,m}^{'}  \ \ \forall m<0,\\
    && b_{1}=-a_{-1}-\frac{1}{\{-1\}-\{2\}}f_{-1,1}^{'},
\\
&&b_{m+1}=b_{m}+\frac{1}{\{1\}-\{m+1\}} f_{1,m}'  \ \ \forall m\geq1.
   \end{eqnarray*}
Let $g$ be an even  endomorphism  of $\mathcal{W}^{q}$ given  by
$g(L_{n})=a_{n}L_{n}$ and $g(G_{n})=b_{n}G_{n}.$\\
By (\ref{1cochain}) and (\ref{1cochain2}) we have  recursively
 \begin{eqnarray*}
\delta^{1}(g)(L_{n},L_{m})&=&(\{n\}-\{m\})(a_{n+m}-a_{m}-a_{n})L_{n+m},
  \end{eqnarray*}
  and
   \begin{eqnarray*}
\delta^{1}(g)(L_{n},G_{m})&=&(\{n\}-\{m+1\})(b_{n+m}-b_{m}-a_{n})G_{n+m}.
  \end{eqnarray*}
   If $h=f-\delta^{1}(g)$ we have
 \begin{eqnarray*}
h_{n,1}&=&f_{n,1}-(\{n\}-\{1\})(a_{n+1}-a_{n})=0\ \forall n\in \mathbb{Z},\\
h_{-1,2}&=&f_{-1,2}-(\{-1\}-\{2\})(a_{1}-a_{2}-a_{-1})=0,\\
h_{1,m}^{'}&=&f_{1,m}^{'}-(\{1\}-\{m+1\})(b_{m+1}-b_{m})=0\ \forall m\in \mathbb{Z}^{*},\\
h_{-1,1}^{'}&=& f_{-1,1}^{'}+(\{-1\}-\{2\})(b_{1}+a_{-1})=0.
  \end{eqnarray*}
     This proves the lemma.
\end{proof}

\begin{lem}\label{lemma3.3}
Let $f$ be an even $2$-cocycle of degree zero such that $f(L_{n},L_{1})=0$ and $f(L_{-1},L_{2})=0$. 
Then the cohomology class of  $f$ is trivial  on the space $\mathcal{W}_{0}^{q}\times \mathcal{W}_{0}^{q}$.
\end{lem}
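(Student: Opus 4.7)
Since $f$ has even parity and degree zero, one writes $f(L_{n},L_{m}) = a_{n,m} L_{n+m}$ with $a_{n,m} = -a_{m,n}$. The hypothesis $f(L_{n}, L_{1}) = 0$ and antisymmetry give $a_{n,1} = 0 = a_{1,m}$ for all $n,m \in \mathbb{Z}$, while $f(L_{-1}, L_{2}) = 0$ gives $a_{-1,2} = 0 = a_{2,-1}$. The aim is to prove $a_{n,m} = 0$ for every pair $(n,m)$.

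The first step is to specialize the cocycle identity (\ref{pairds}) with $s = 0$ and $n = 1$. Because $a_{1, *} = 0$, three of the six terms vanish immediately and one obtains the workhorse recurrence
$$(1+q)(\{m+p\}-\{1\})\, a_{m,p} = (1+q^{p})(\{m\}-\{1\})\, a_{m+1,p} - (1+q^{m})(\{p\}-\{1\})\, a_{p+1,m},$$
which couples $a_{m,p}$ to its neighbours $a_{m+1,p}$ and $a_{p+1,m}$ and will drive every subsequent induction.

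The technical heart of the proof is to establish $a_{n,2} = 0$ for every $n \in \mathbb{Z}$. The workhorse recurrence alone cannot propagate vanishing within the row $p = 2$: at $p = 1$ the factor $\{p\}-\{1\}$ vanishes, so the recurrence becomes trivial and the seed $a_{-1,2} = 0$ does not spread. I would therefore combine it with carefully chosen specializations of (\ref{pair}). For instance, at $(m,p) = (0,-1)$ the recurrence collapses to $(1+q)(\{-1\}-\{1\}) a_{0,-1} = 0$, giving $a_{0,-1} = 0$; then (\ref{pair}) at $(n,m,p) = (0,-1,2)$ reduces, after substitution of the already-known zeros, to an equation of the form $(1+q^{-1})(\{2\}-\{-1\}) a_{0,2} = (1+q^{2})(\{-1\}-\{2\}) a_{0,-1} = 0$, forcing $a_{0,2} = 0$. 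Iterating this elimination strategy -- picking triples $(n,m,p)$ whose non-trivial content after substitution of known zeros involves at most one new unknown -- yields $a_{n,2} = 0$ for all $n$, possibly by induction on $|n|$.

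Once $a_{*,2} \equiv 0$, an induction on $p$ closes the argument. Assuming $a_{*,p} \equiv 0$ for some $p \geq 2$, the workhorse recurrence reduces to $(1+q^{m})(\{p\}-\{1\}) a_{p+1,m} = 0$; since $\{p\} \neq \{1\}$ for $p \neq 1$ and $1 + q^{m} \neq 0$ under the generic assumption on $q$, we conclude $a_{p+1,m} = 0$ for all $m$, and by antisymmetry $a_{*, p+1} = 0$. This handles all $p \geq 2$; a symmetric downward induction, using the workhorse recurrence at negative values together with antisymmetry, covers the non-positive indices. Hence $f \equiv 0$ on $\mathcal{W}_{0}^{q} \times \mathcal{W}_{0}^{q}$, so its cohomology class on this subspace is trivial.

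I expect the main obstacle to be the bootstrap step above: a single seed $a_{-1,2} = 0$ must be leveraged into the vanishing of the entire infinite family $\{a_{n,2}\}_{n\in\mathbb{Z}}$, and the degeneracy of the workhorse recurrence at $p = 1$ means there is no clean one-step propagation inside this row. One is forced into a short but delicate case-by-case search for cocycle triples whose reductions give usable one- or two-term linear equations. Once this $p = 2$ row is cleared, the remaining $p$-induction is routine.
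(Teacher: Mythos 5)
Your setup and overall architecture coincide with the paper's: the same normalization $f(L_n,L_m)=a_{n,m}L_{n+m}$, the same reduction of (\ref{pairds}) with $s=0$ to a three-term recurrence by specializing one index to $1$ and using $a_{1,\ast}=0$, the same seeds $a_{n,1}=a_{-1,2}=0$, and the same row-by-row induction to finish once the low rows are cleared. The two computations you actually carry out ($a_{0,-1}=0$ from the degenerate instance of the recurrence, then $a_{0,2}=0$ from the triple $(0,-1,2)$ in (\ref{formul 1})) are correct and reproduce the paper's Case $k=0$.

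The gap sits exactly where you flag the "technical heart": clearing the whole row $a_{\ast,2}$ (equivalently $a_{\ast,-2}$). The second workhorse one gets from (\ref{formul 1}) by setting $m=-1$ and $p=2$ does propagate the seed $a_{-1,2}=0$ upward, giving $a_{n,2}=0$ for $n\geq -1$, but in the downward direction a coefficient degenerates at $n=-1$ and again at $n=-3$: the recurrence only ties $a_{-3,2}$ proportionally to $a_{-2,2}$ and then resumes with $a_{n,2}=0$ for $n\leq -4$. So the induction on $|n|$ you propose stalls at the pair $\{a_{-2,2},a_{-3,2}\}$ (equivalently $\{a_{2,-2},a_{3,-2}\}$), and no iteration of the two linear recurrences resolves it; moreover the entire downward induction for rows $p\leq -2$ feeds on the seed $a_{2,-2}=0$, so this is not a removable corner case. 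The paper closes it by evaluating (\ref{formul 1}) on the specific triple $(n,m,p)=(2,-2,4)$, where after substituting the zeros already obtained every term but one drops out, leaving $-(1+q^{4})\{4\}\,a_{2,-2}=0$ and hence $a_{2,-2}=a_{3,-2}=a_{-3,2}=0$. Your proposal asserts that suitable triples "whose reductions give usable one- or two-term equations" exist without exhibiting one, and since the existence of such a relation is precisely the nontrivial content of this lemma, the argument is incomplete as written.
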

\begin{proof}
Since $f$ is an even
$2$-cocycle  of degree zero, by (\ref{pairds}) we have
\begin{eqnarray}
&&-(1+q^{p})(\{m\}-\{n\})a_{n+m,p}+ (1+q^{m})(\{p\}-\{n\})a_{n+p,m}+(1+q^{n})(\{p\}-\{m\})a_{n,m+p} \nonumber\\
&&+(1+q^{n})(\{m+p\}-\{n\})a_{m,p}-(1+q^{m})(\{n+p\}-\{m\})a_{n,p}+(1+q^{p})(\{n+m\}-\{p\})a_{n,m} \nonumber\\
&&=0.  \label{formul 1}\ \ \ \ \ \ \ \ \
\end{eqnarray}
Taking $m=-1$ in (\ref{formul 1}), we have
\begin{eqnarray}
&&-(1+q^{p})(\{-1\}-\{n\})a_{n-1,p}+ (1+q^{-1})(\{p\}-\{n\})a_{n+p,-1}+(1+q^{n})(\{p\}-\{-1\})a_{n,-1+p}\nonumber \\
&&+(1+q^{n})(\{-1+p\}-\{n\})a_{-1,p}-(1+q^{-1})(\{n+p\}-\{-1\})a_{n,p}+(1+q^{p})(\{n-1\}-\{p\})a_{n,-1}\nonumber \\
&&=0.\label{coh1} \ \ \ \ \ \ \ \
\end{eqnarray}
Setting $m=1$ in (\ref{formul 1}), since $a_{1,k}=-a_{k,1}=0 \ \forall k\in \mathbb{Z}$, we have
\begin{eqnarray}
-(1+q^{p})(\{1\}-\{n\})a_{n+1,p}+(1+q^{n})(\{p\}-\{1\})a_{n,1+p}
-(1+q)(\{n+p\}-\{1\})a_{n,p}
=0. \label{cohd}  
\end{eqnarray}
We investigate the  following cases:\\
\textbf{ Case 1: $k=0$}\\
In (\ref{cohd}) we consider $p=0$ this gives
\begin{eqnarray}
-2(\{1\}-\{n\})a_{n+1,0}
-(1+q)(\{n\}-\{1\})a_{n,0}
=0.  
\end{eqnarray}
That is,
\begin{equation*}
 a_{n,0}=\frac{2}{1+q} a_{n+1,0},\ \   a_{n+1,0}=\frac{1+q}{2} a_{n,0}, \ \forall n\neq 1.
\end{equation*}

Starting from $a_{1,0}=-a_{0,1}=0$ this implies for $n\leq 0$ that $a_{n,0}=0$ and for $n\geq 3$ that $a_{n,0}=\big(\frac{1+q}{2}\big)^{n-2}a_{2,0}$.\\
Next we consider (\ref{coh1}) for $ n=0,\ p=2$. It follows
\begin{eqnarray*}
&&-(1+q^{2})\{-1\}a_{-1,2}+ (1+q^{-1})\{2\}a_{2,-1}+2(\{2\}-\{-1\})a_{0,1}\\
&&+2\{1\}a_{-1,2}-(1+q^{-1})(\{2\}-\{-1\})a_{0,2}+(1+q^{2})(\{-1\}-\{2\})a_{0,-1}\\
&&=0. \ \ \ \ \ \ \ \
\end{eqnarray*}
The term $a_{-1,2}$ cancels and we know already that $a_{0,1}=a_{0,-1}=0$.  Hence $a_{0,2}=0$.\\ This implies
\begin{equation*}
    a_{n,0}=0\ \ \forall n\in \mathbb{Z}.
\end{equation*}
\textbf{ Case 2: $k=-1$}\\
In (\ref{cohd}) we set $p=-1$ and obtain (with $a_{n,0}=0$)
\begin{eqnarray*}
-(1+q^{-1})(\{1\}-\{n\})a_{n+1,-1}
-(1+q)(\{n-1\}-\{1\})a_{n,-1}
=0.   
\end{eqnarray*}
Hence,
\begin{equation*}
    a_{n,-1}=-\frac{(1+q^{-1})(\{1\}-\{n\})}{(1+q)(\{n-1\}-\{1\})}a_{n+1,-1}\  \forall n\neq 2,\ \  a_{n+1,-1}=-\frac{(1+q)(\{n-1\}-\{1\})}{(1+q^{-1})(\{1\}-\{n\})}a_{n,-1} \ \forall n\neq 1.
\end{equation*}
The first formula, starting from $a_{1,-1}=-a_{-1,1}=0$, implies that $ a_{n,-1}=0,$ for all $n\leq 1$.\\
The second formula, for $n=2$, implies $ a_{3,-1}=0$ and hence $a_{n,-1}=0$ for $n\geq 3$. But by assumption $a_{2,-1}=-a_{-1,2}=0.$ Hence,
\begin{equation*}
    a_{n,-1}=0\ \ \forall n\in \mathbb{Z}.
\end{equation*}
\textbf{ Case 3: $k=-2$}\\
We plug the value $p=-2$ into (\ref{cohd}) and get for terms not yet identified as zero
\begin{eqnarray*}
-(1+q^{-2})(\{1\}-\{n\})a_{n+1,-2}
-(1+q)(\{n-2\}-\{1\})a_{n,-2}
=0.   
\end{eqnarray*}
This yields
\begin{equation*}
    a_{n+1,-2}=\frac{(1+q)(\{1\}-\{n-2\})}{(1+q^{-2})(\{1\}-\{n\})}a_{n,-2}\  \forall n\neq 1,\ \ a_{n,-2}=\frac{(1+q^{-2})(\{n\}-\{1\})}{(1+q)(\{n-2\}-\{1\})}a_{n+1,-2} \ \forall n\neq 3.
\end{equation*}
From the first formula we get $ a_{3,-2}=\frac{1+q}{(1+q^{-2})(\{1\}-\{2\})}a_{2,-2},\  a_{4,-2}=0$ and hence $ a_{n,-2}=0$ for all $n\geq 4$.\\
From the second formula we get starting from $a_{1,-2}=0$ that $ a_{n,-2}=0$ for all $n\leq 1$.\\
Finally, $ a_{n,-2}=0$ for  $n\neq 2,\ 3$. The value $ a_{3,-2}=\frac{1+q}{(1+q^{-2})(\{1\}-\{2\})}a_{2,-2}$ stays undetermined for the moment.\\
\textbf{ Case 4: $k=2$}\\
We start from (\ref{coh1}) for $p=2$ and recall that terms of levels $0,\ 1,\ -1$ are zero. This gives
\begin{eqnarray*}
-(1+q^{2})(\{-1\}-\{n\})a_{n-1,2}
-(1+q^{-1})(\{n+2\}-\{-1\})a_{n,2}
=0. 
\end{eqnarray*}
Hence,
\begin{equation*}
    a_{n,2}=\frac{(1+q^{2})(\{n\}-\{-1\})}{(1+q^{-1})(\{n+2\}-\{-1\})}a_{n-1,2}\  \forall n\neq -3,\ \  a_{n-1,2}=\frac{(1+q^{-1})(\{n+2\}-\{-1\})}{(1+q^{2})(\{n\}-\{-1\})}a_{n,2} \ \forall n\neq -1.
\end{equation*}
In  the first formula we start from $a_{-1,2}=0$ and get $a_{n,2}=0,\ \forall n\geq -1.$\\
From the second, we get $ a_{-3,2}=-\frac{(1+q^{-1})\{-1\}}{(1+q^{2})(\{-2\}-\{-1\})}a_{-2,2}$, then $a_{-4,2}=0$ and then altogether $a_{n,2}=0$ for all $n\neq \ -2,\ -3.$\\
The value  $ a_{-3,2}=-\frac{(1+q^{-1})\{-1\}}{(1+q^{2})(\{-2\}-\{-1\})}a_{-2,2}$ stays undetermined for the moment.
To find it we consider the index triple $(2,-2,4)$ in (\ref{formul 1}) and obtain after leaving out terms which are obviously zero
$
-(1+q^{4 })\{4\}a_{2,-2}
=0.$

This shows that $a_{3,-2}=a_{-3,2}=a_{2,-2}=0$ and we can conclude
$
    a_{n,-2}=a_{n,2}=0\ \ \forall n\in \mathbb{Z}.
$

\textbf{ Case 5: $k<-2$}\\
We make induction assuming it true for $k=2,\ 1,0\ ,-1,\ -2.$ We start from (\ref{formul 1}) for $p$ and put  $p=-1$. We get 
\begin{equation}
 (1+q^{m})(\{-1\}-\{n\})a_{n-1,m}+(1+q^{n})(\{-1\}-\{m\})a_{n,m-1}
+(1+q^{-1})(\{n+m\}-\{-1\})a_{n,m}
=0.    \label{coh2}
\end{equation}
Then
\begin{equation*}
 (1+q^{n})(\{-1\}-\{m\})a_{n,m-1}= (1+q^{-1})(\{-1\}-\{n+m\})a_{n,m}-(1+q^{m})(\{-1\}-\{n\})a_{n-1,m}.
\end{equation*}
We deduce that $a_{n,m}=0, \forall \ m<-2$.\\
\textbf{ Case 6: $k>2$}\\
We make induction assuming it true for $n=2.$ By (\ref{cohd}) we have
\begin{eqnarray*}
(1+q^{p})(\{1\}-\{n\})a_{n+1,p}&=&(1+q^{n})(\{p\}-\{1\})a_{n,1+p}-
(1+q)(\{n+p\}-\{1\})a_{n,p}.
\end{eqnarray*}
We deduce that $a_{n,m}=0, \forall \ n>2$.\\
\end{proof}

\begin{lem}\label{lemma3.4}
Let $f$ be an even $2$-cocycle of degree zero such that $f(L_{n},L_{1})=0$ , $f(L_{-1},L_{2})=0$, $f(L_{1},G_{m})=0, \forall m\in \mathbb{Z}^{*}$ and $f(L_{-1},G_{1})=0$. 
Then the cohomology class of  $f$ is trivial  on the space $\mathcal{W}_{0}^{q}\times\mathcal{W}_{1}^{q}$.
\end{lem}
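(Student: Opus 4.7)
The plan is to mimic the strategy of Lemma~\ref{lemma3.3} almost verbatim, but now for the coefficients $b_{n,p}$ that encode $f(L_n,G_p)=b_{n,p}G_{n+p}$. The four vanishing hypotheses imposed on $f$ already contain the two hypotheses of Lemma~\ref{lemma3.3}, so an immediate application of that lemma yields $a_{n,p}=0$ for all $n,p\in\mathbb{Z}$. Substituting this into the mixed cocycle relation \eqref{pairdsd}, the term carrying $a_{n,m}$ drops out, and one is left with a purely ``$b$-linear'' recurrence
$$
0 = -(1+q^{p+1})(\{m\}-\{n\})b_{n+m,p} - (1+q^{m})(\{p+1\}-\{n\})b_{m,n+p}
 + (1+q^{n})(\{p+1\}-\{m\})b_{n,m+p} + (1+q^{n})(\{m+p+1\}-\{n\})b_{m,p} - (1+q^{m})(\{p+n+1\}-\{m\})b_{n,p},
$$
together with the seed data $b_{1,m}=0$ for $m\neq 0$ and $b_{-1,1}=0$.

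I would then propagate these zeros in a case analysis parallel to the six cases of Lemma~\ref{lemma3.3}. Setting $m=1$ in the relation (so that every $b_{1,\ast}$-term with nonzero second index disappears) produces a two-term recurrence that slides along a single row or column starting from the vanishing seed. Setting $m=-1$ produces a companion recurrence propagating in the opposite direction. After handling the generic region this way, the small-index values $b_{n,p}$ with $p\in\{-2,-1,0,1,2\}$ (which are the analogues of the exceptional cells $a_{n,\pm 2}$, $a_{n,\pm 1}$ in Lemma~\ref{lemma3.3}) would be pinned down by substituting $m=\pm 2$ in the recurrence and by evaluating \eqref{pairdsd} on well-chosen index triples, e.g.\ $(2,-2,p)$ or $(1,-1,p)$, to kill the remaining undetermined coefficients. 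The residual value $b_{1,0}$ not covered by the seed must be captured by one such auxiliary equation.

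The main obstacle is not the propagation itself but the bookkeeping of degenerate coefficients. Each two-term recurrence is valid only at indices where the relevant prefactor $(1+q^{\ast})(\{j\}-\{k\})$ is nonvanishing, and one must check that every exceptional index (where the recurrence degenerates) is either already in the seed set or is fixed by a separate compatibility relation derived from \eqref{pairdsd}. Thus the technical work is the careful matching of available recurrences to the indices they control, exactly in the spirit of Cases~1--6 of Lemma~\ref{lemma3.3}; once this matching is carried out, one concludes $b_{n,p}=0$ for all $n,p$, hence $f$ vanishes on $\mathcal{W}_{0}^{q}\times\mathcal{W}_{1}^{q}$, i.e.\ its cohomology class is trivial there.
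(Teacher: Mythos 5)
Your plan coincides with the paper's own proof: it too substitutes $a_{n,m}=0$ (from the preceding lemma) into \eqref{pairdsd} to obtain the purely $b$-linear relation \eqref{cobbbss}, propagates the seeds $b_{1,m}=0$ ($m\neq 0$) and $b_{-1,1}=0$ via the $m=\pm 1$ specializations through a six-case analysis on the first index, and finally eliminates the residual coefficient $b_{1,0}$ by evaluating the relation on special index triples (computing $b_{2,-3}$ in two ways and comparing). The only thing your outline leaves implicit is that this last elimination is the genuinely laborious step, but the strategy you describe is exactly the one carried out in the paper.
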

\begin{proof}
Since $f$ is an even
$2$-cocycle  of degree zero and $f(L_{n},L_{m})=0$,
by (\ref{pairdsd}) we obtain
\begin{eqnarray}\label{cobbbss}
&&-(1+q^{p+1})(\{m\}-\{n\})b_{n+m,p}-(1+q^{m})(\{p+1\}-\{n\})b_{m,n+p}\nonumber \\
&&+(1+q^{n})(\{p+1\}-\{m\})b_{n,m+p}
+(1+q^{n})(\{m+p+1\}-\{n\}) b_{m,p} \\
&& - (1+q^{m})(\{p+n+1\}-\{m\})b_{n,p}=0
.\nonumber
\end{eqnarray}
In (\ref{cobbbss}) we set  $m=1$   and obtain
\begin{eqnarray}
&&-(1+q^{p+1})(\{1\}-\{n\})b_{n+1,p}-(1+q)(\{p+1\}-\{n\})b_{1,n+p}
+(1+q^{n})(\{p+1\}-\{1\})b_{n,1+p}
\nonumber \\ && +(1+q^{n})(\{p+2\}-\{n\}) b_{1,p}-(1+q)(\{n+p+1\}-\{1\})b_{n,p}
=0.\ \ \ \ \ \ \ \ \label{coh3}
\end{eqnarray}
Then (for $p\neq 0,\ n+p\neq0$)
\begin{eqnarray}
-(1+q^{p+1})(\{1\}-\{n\})b_{n+1,p}
+(1+q^{n})(\{p+1\}-\{1\})b_{n,1+p}
-(1+q)(\{n+p+1\}-\{1\})b_{n,p}
=0. \nonumber\\ \ \label{coh31}
\end{eqnarray}

We consider in the following the  different cases:\\
\textbf{ Case 1: $k=0$}\\
In (\ref{coh31}),  we set $n=0$ and obtain (for $p\neq 0$)
\begin{eqnarray}
2(\{p+1\}-\{1\})b_{0,1+p}
-(1+q)(\{p+1\}-\{1\})b_{0,p}
=0.\ \ \ \ \ \ \ \
\end{eqnarray}
Hence
\begin{equation*}
  b_{0,1+p}=\frac{1+q}{2} b_{0,p},\ for\  p\neq 0,\   
  \end{equation*}
which gives
  \begin{equation}\label{formula2}
 b_{0,p}=\big(\frac{1+q}{2} \big)^{p-1} b_{0,1}  ,\ for\  p> 0,\   .
  \end{equation}
Letting  $m=0,\ p=1$ in (\ref{cobbbss}), we get
\begin{eqnarray*}
&&(1+q^{2})\{n\}b_{n,1}- 2(\{2\}-\{n\})b_{0,n+1}
+(1+q^{n})\{2\}b_{n,1}
+(1+q^{n})(\{2\}-\{n\})b_{0,1}-2\{n+2\}b_{n,1}
=0.  \ \ \ \ \ \ \ \
\end{eqnarray*}
Hence
\begin{equation}\label{medje}
- 2(\{2\}-\{n\})b_{0,n+1}+(1+q^{n})(\{2\}-\{n\})b_{0,1}
=0.
\end{equation}
Setting $n=3$ in (\ref{medje}), we obtain
 $b_{0,4}  = \frac{1+q^3}{2} b_{0,1} $. Furthermore, taking $p=4$ in (\ref{formula2}), one has $b_{0,4}=(\frac{1+q}{2})^{3}b_{0,1} $. Thus $b_{0,1}=0$. Using  this in (\ref{medje}), we obtain $b_{0,n}=0$ for all $n\neq 3$.\\
Since $b_{0,1}=0$ and  setting $p=3$ in (\ref{formula2}) leads to  $ b_{0,3} =0. $\\
Hence
\begin{equation*}
b_{0,n}=0 \ \forall n\in \ \mathbb{Z}.
\end{equation*}
\textbf{ Case 2: $k=-1$}\\
In (\ref{coh31}), we set $n=-1$ and obtain (for $p\neq 0,1$ )
\begin{eqnarray}
 (1+q^{-1})(\{p+1\}-\{1\})b_{-1,1+p}
-(1+q)(\{p\}-\{1\})b_{-1,p}
&=&0.\label{co1}
\end{eqnarray}
Hence
\begin{equation*}
b_{-1,p}=\frac{1}{q}\frac{\{p+1\}-\{1\}}{\{p\}-\{1\}}b_{-1,1+p}, \ b_{-1,1+p}=q\frac{\{p\}-\{1\}}{\{p+1\}-\{1\}}b_{-1,p}\ \textrm{for}\  p\neq 0,\ 1.
\end{equation*}
The first formula  implies  $b_{-1,p}=q^{p}\frac{\{-1\}}{\{p-1\}}b_{-1,0}$
for $ p\leq 0.$\\
The second formula  implies $b_{-1,p}=q^{p-2}\frac{1}{\{p-1\}}b_{-1,2}$ for $ p\geq2$ .\\
By assumption $b_{-1,1}=0$.

Taking $n=-1,\ p=0$ in (\ref{coh3}), we obtain
\begin{eqnarray}
(1+q^{-1})(\{2\}-\{-1\}) b_{1,0}+(1+q)\{1\}b_{-1,0}
=0.\ \ \ \ \ \ \
\end{eqnarray}
Then 
$
 b_{-1,0}=-q^{-2}\{3\}  b_{1,0}
$. 
We deduce that
\begin{equation*}
 b_{-1,p} = q^{p-3}\frac{\{3\}}{\{p-1\}}  b_{1,0}              ,\ \  \textrm{for}\ p\leq 0 .
\end{equation*}
Taking $n=-1,\ p=1$ in (\ref{coh3}), we obtain
\begin{eqnarray*}
&&-(1+q)(\{2\}-\{-1\})b_{1,0}
+(1+q^{-1})(\{2\}-\{1\})b_{-1,2}
=0.\ \ \ \ \ \
\end{eqnarray*}
Then
$ 
  b_{-1,2} = q^{-1}\{3\} b_{1,0}      
$. 
We deduce that
\begin{equation*}
    b_{-1,p} = q^{p-3}\frac{\{3\}}{\{p-1\}}  b_{1,0}              ,\ \  \textrm{for}\ p\geq 2.
\end{equation*}

\textbf{ Case 3: $k=-2$}\\
Taking $n=-2,\ p=2$ in (\ref{coh3})
\begin{eqnarray*}
&&-(1+q^{3})(\{1\}-\{-2\})b_{-1,2}-(1+q)(\{3\}-\{-2\})b_{1,0}+(1+q^{-2})(\{3\}-\{1\})b_{-2,3}
=0.\ \ \ \ \ \
\end{eqnarray*}
Then
\begin{equation*}
  b_{-2,3}=\frac{1+q^{3}}{1+q^{-2}} \frac{(\{1\}-\{-2\})}{(\{3\}-\{1\})} q^{-1}\{3\} b_{1,0}+\frac{1+q}{1+q^{-2}}\frac{\{3\}-\{-2\}}{\{3\}-\{1\}} b_{1,0}.
\end{equation*}
Taking $n=-2,\ p=1$ in (\ref{coh3}), we obtain
\begin{eqnarray*}
&&(1+q^{-2})(\{2\}-\{1\})b_{-2,2}+(1+q)\{1\}b_{-2,1}
=0.\ \ \ \ \ \ \ \
\end{eqnarray*}
Then
\begin{equation}
 b_{-2,2}=\frac{1+q}{1+q^{-2}}\frac{1-q}{q^2-q} b_{-2,1}= -q\frac{1+q}{1+q^{2}}b_{-2,1}.\label{12juin}
\end{equation}
Taking $n=-2,\ p=0$ in (\ref{coh3}) leads to
\begin{eqnarray*}
&&-(1+q)(\{1\}-\{-2\})b_{-1,0}
+(1+q^{-2})(\{2\}-\{-2\}) b_{1,0}-(1+q)(\{-1\}-\{1\})b_{-2,0}
=0.\ \ \ \ \ \ \ \
\end{eqnarray*}
Then
\begin{eqnarray*}
 b_{-2,0}  = -q^{-3}\frac{\{3\}^2}{\{2\}} b_{1,0}-q^{-3}\frac{1+q^2}{1+q}\frac{\{4\}}{\{2\}}b_{1,0}.
\end{eqnarray*}

We plug the value $n=-2$ into (\ref{coh31}) and get for the terms not yet identified as zero
\begin{equation*}
-(1+q^{p+1})(\{1\}-\{-2\})b_{-1,p}+(1+q^{-2})(\{p+1\}-\{1\})b_{-2,1+p}
-(1+q)(\{p-1\}-\{1\})b_{-2,p}
=0\ \textrm{for}\ p\neq 0,\ 2.
\end{equation*}
This yields
\begin{equation*}
 b_{-2,1+p}=\frac{1+q^{p+1}}{1+q^{-2}}\frac{\{1\}-\{-2\}}{\{p+1\}-\{1\}} q^{p-3}\frac{\{3\}}{\{p-1\}}  b_{1,0}  +\frac{1+q}{1+q^{-2}}\frac{\{p-1\}-\{1\}}{\{p+1\}-\{1\}}b_{-2,p},
\end{equation*}
\begin{equation*}
b_{-2,p}
=- \frac{1+q^{p+1}}{1+q}\frac{\{1\}-\{-2\}}{\{p-1\}-\{1\}} q^{p-3}\frac{\{3\}}{\{p-1\}}  b_{1,0}+                             \frac{1+q^{-2}}{1+q}\frac{\{p+1\}-\{1\}}{\{p-1\}-\{1\}}b_{-2,1+p}\ \textrm{for}\ p\neq 0,\ 1,\ 2.
\end{equation*}
By direct calculation, we obtain
\begin{equation*}
b_{-2,p}=\frac{\{3\}^2}{\{p-2\}}q^{p-5}b_{1,0}-(\frac{1+q^2}{1+q})^{2-p}\frac{\{2\}^2}{\{p-2\}\{p-1\}}q^{2p-6}b_{1,0}\  \textrm{for} \ p\neq 0,\ 1,\ 2,\ 3.
\end{equation*}

\textbf{ Case 4: $k=2$}\\
Taking $n=2,\ m=-1,\ p=-2$ in (\ref{cobbbss}), we obtain
$$
b_{2,-2}=q^{-2}\frac{\{3\}^2}{\{2\}}b_{1,0}+q^{-2}\frac{1+q^2 \{3\}}{1+q\{2\}}b_{1,0}.
$$
Now, taking $n=2,\ m=-2,\ p=-2$ in (\ref{cobbbss}), we obtain
\begin{eqnarray*}
&&-(1+q^{-2})(\{-1\}-\{2\})b_{-2,0}
+(1+q^{2})(\{-1\}-\{-2\})b_{2,-4}\\&&
+(1+q^{2})(\{-3\}-\{2\}) b_{-2,-2}
 - (1+q^{-2})(\{1\}-\{-2\})b_{2,-2}=0
.\nonumber
\end{eqnarray*}
Hence
\begin{eqnarray*}
b_{2,-4}   &=& q^{-2}q\frac{q^{3}-1}{1-q}b_{-2,0}-q^{-1}\frac{q^{5}-1}{1-q}b_{-2,-2}+q^{-2}\frac{1-q^{3}}{1-q}b_{2,-2}.
\end{eqnarray*}
Recall that
\begin{eqnarray*}
 b_{-2,0}  &=& \frac{q^{-2}-q}{q-q^{-1}} q^{-2}\{3\}  b_{1,0}+\frac{1+q^{-2}}{1+q}\frac{q^{-2}-q^{2}}{q-q^{-1}}b_{1,0},\\
 &=& q^{-3}\frac{1-q^{3}}{q^{2}-1} \{3\}  b_{1,0}+ q^{-3}\frac{1+q^{2}}{1+q}\frac{1-q^{4}}{q^{2}-1}b_{1,0},
\end{eqnarray*}
\begin{eqnarray*}
      b_{2,-2} &=&q^{-2}\frac{q^{3}-1}{1-q^2}\frac{1-q^3}{1-q} b_{1,0}+q\frac{1+q^2}{1+q}q^{-1} \frac{q^{4}-1}{1-q^{2}}q^{-5}q^{3}\frac{1-q^{3}}{q^{3}-1}b_{1,0}\\
      &=&-q^{-2}\frac{(1-q^3)^2}{(1-q^2)(1-q)}b_{1,0}+q^{-2}\frac{1+q^2}{1+q} \frac{1-q^{4}}{1-q^{2}}b_{1,0},
    \end{eqnarray*}
\begin{equation*}
b_{-2,-2}=\frac{\{3\}^2}{\{-4\}}q^{-7}b_{1,0}-(\frac{1+q^2}{1+q})^{4}\frac{(1+q)^2}{\{-4\}\{-3\}}q^{-10}b_{1,0}.
\end{equation*}
We deduce that
\begin{eqnarray*}
  b_{2,-4}
  &=&2q^{-4}\frac{(1-q^{3})(1-q^{4})^{2}}{(1-q^2)^3}b_{1,0}\\
  &&-q^{-4}\frac{(1-q^5)(1-q^{3})^2}{(1-q)^2(1-q^{4})}b_{1,0}-q^{-4}\frac{(1-q^{5})(1-q^{4})^2}{(1-q)(1+q)^4(1-q^{3})(1-q^{2})}b_{1,0}.\\
\end{eqnarray*}
Taking $n=2,\ m=-1,\ p=-3$ in (\ref{cobbbss}), we obtain
\begin{eqnarray*}
&&-(1+q^{-1})(\{-2\}-\{2\})b_{-1,-1}
+(1+q^{2})(\{-2\}-\{-1\})b_{2,-4}\\
&&+(1+q^{2})(\{-3\}-\{2\}) b_{-1,-3}
 + (1+q^{-1})\{-1\}b_{2,-3}=0
.\nonumber
\end{eqnarray*}
Hence
\begin{eqnarray*}
  b_{2,-3}
  =q^{-3}\frac{q^{4}-1}{q-1}\frac{1-q^3}{q^{2}-1} b_{1,0}-\frac{1+q^2}{1+q}b_{2,-4}-q^{-3}\frac{1+q^2}{q+1}\frac{q^{5}-1}{q-1}\frac{1-q^{3}}{q^{4}-1} b_{1,0}.\\
\end{eqnarray*}
Then
\begin{eqnarray*}
  b_{2,-3} = q^{-3}\frac{\{4\}\{3\}}{\{2\}}b_{1,0}-2q^{-4}\frac{\{3\}\{4\}^3}{\{2\}^5}b_{1,0}+q^{-4}\frac{\{5\}\{3\}}{\{2\}^2}b_{1,0}
  +q^{-4}\frac{\{5\}\{4\}^3}{\{2\}^7\{3\}}b_{1,0}+q^{-3}\frac{\{5\}\{3\}}{\{2\}^2}b_{1,0}.\\
\end{eqnarray*}

Taking $n=2,\ m=-2,\ p=-3$ in (\ref{cobbbss}), we obtain
\begin{eqnarray}
&&-(1+q^{-2})(\{-2\}-\{2\})b_{-2,-1}
+(1+q^{2})(\{-4\}-\{2\}) b_{-2,-3}
 + (1+q^{-2})\{-2\}b_{2,-3}=0
.\nonumber
\end{eqnarray}
Then
\begin{eqnarray*}
   b_{2,-3}  = \frac{\{4\}}{\{2\}}b_{1,0}-  \frac{\{4\}^4}{\{2\}^6\{3\}}  q^{-3}b_{1,0}
   +\frac{\{6\}}{\{2\}}\frac{\{3\}^2}{\{5\}}q^{-3}+\frac{\{6\}\{4\}^4}{\{2\}^9\{5\}} q^{-3}b_{1,0}.\\
\end{eqnarray*}
Comparing to the previous result, we deduce that $ b_{1,0}=0.$ Hence we get 
\begin{eqnarray*}
  &&b_{1,p} = 0 \  \ \  \forall p\in \mathbb{Z}.\\
       && b_{-1,p} = 0\  \ \  \forall p\in \mathbb{Z},\\
      && b_{-2,p} = 0 \  \ \  \forall p\neq 1,\ 2.\\
        &&  b_{2,p} = 0\  \ \  \textrm{for} \ p=-2,\ -4,\ -3.
\end{eqnarray*}
Taking $n=2,\ m=-1$ in (\ref{cobbbss}), we obtain
\begin{eqnarray}\label{deux}
(1+q^{2})(\{p+1\}-\{-1\})b_{2,p-1}
 - (1+q^{-1})(\{p+3\}-\{-1\})b_{2,p}=0
.\nonumber
\end{eqnarray}
Hence
\begin{equation*}
  b_{2,p}
 =
q\frac{1+q^{2}}{1+q}\frac{1-q^{p+2}}{1-q^{p+4}}b_{2,p-1}\ \textrm{for}\ p\neq -4,  \  b_{2,p-1} =q^{-1}\frac{1+q}{1+q^{2}} \frac{1-q^{p+4}}{1-q^{p+2}}b_{2,p}
   \ \textrm{for}\ p\neq -2.
\end{equation*}
The first formula for $p=-2$ implies $ b_{2,-2}=0$ and hence $b_{2,p}=0$ for $p\geq -2$.\\
The second formula for $p=-4$ implies $ b_{2,-5}=0$ and hence $b_{2,p}=0$ for $p\leq -5$.\\

We take $n=-2,\ m=2,$ $p=-1$ in (\ref{cobbbss}) and recall that terms $b_{0,-1},\ b_{2,-3},\ b_{2,-1}$ and $b_{-2,-1}$  are zero. This gives
 $$b_{-2,1}=0.$$
 Then by (\ref{12juin}) we deduce that $$b_{-2,2}=0.$$
\textbf{ Case 5: $k>2$}\\
By (\ref{coh3}) we have
\begin{eqnarray*}
(1+q^{p+1})(\{1\}-\{n\})b_{n+1,p}
&=&(1+q^{n})(\{p+1\}-\{1\})b_{n,1+p}
-(1+q)(\{n+p+1\}-\{1\})b_{n,p}
.
\end{eqnarray*}
As $k\geq 2$ the value $(1+q^{p+1})(\{1\}-\{k\})\neq0$ and we get by induction trivially the statement for $k+1$ then $$b_{k,p}=0\ \forall k>2.$$

\textbf{ Case 6: $k<-2$}\\
Taking $m=-1$ in (\ref{cobbbss}) since $b_{-1,k}=0, \ \forall k\in \mathbb{Z}$ we have
\begin{equation*}
-(1+q^{p+1})(\{-1\}-\{n\})b_{n-1,p}
+(1+q^{n})(\{p+1\}-\{-1\})b_{n,-1+p}
-(1+q^{-1})(\{n+p+1\}-\{-1\})b_{n,p}
=0.\ \ \ \ \ \ \ \
\end{equation*}
So
\begin{equation*}
(1+q^{p+1})(\{-1\}-\{n\})b_{n-1,p}=
(1+q^{n})(\{p+1\}-\{-1\})b_{n,-1+p}
-(1+q^{-1})(\{n+p+1\}-\{-1\})b_{n,p}
.\ \ \ \ \ \ \ \
\end{equation*}
As $k\leq- 2$ then $(1+q^{p+1})(\{-1\}-\{k\})\neq0$.  We get by induction obviously  the statement for $k-1$.\\
Then $$b_{k,p}=0,\ \forall\ k<-2.$$\\
Finally, we obtain   $$b_{k,p}=0,\ \forall k,\ p\in \mathbb{Z}.$$
\end{proof}
\begin{lem}
Let $f$ be an even $2$-cocycle of degree zero such that $f(L_{1},G_{n})=0$ and $f(L_{-1},L_{2})=0$. 
Then the cohomology class of $f$ is trivial on the space $\mathcal{W}_{1}^{q}\times\mathcal{W}_{1}^{q}$.
\end{lem}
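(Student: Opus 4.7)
The plan is to write $f(G_m, G_p) = c_{m,p} L_{m+p}$ --- valid since $f$ is even of degree zero --- and to show directly that $c_{m,p} = 0$ for all $m, p \in \mathbb{Z}$, so the restriction of $f$ to $\mathcal{W}_1^{q}\times\mathcal{W}_1^{q}$ vanishes identically, making its cohomology class on that space automatically trivial (no coboundary adjustment is needed).

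The key preliminary observation is the super-symmetry of $f$ on two odd arguments: because $|G_m| = |G_p| = 1$, the antisymmetry rule $f(G_m, G_p) = -(-1)^{|G_m||G_p|}f(G_p, G_m)$ flips sign twice and gives $c_{m,p} = c_{p,m}$. Next the cocycle equation (\ref{paird}) (with $s=0$) should be specialized to $n=0$. Using $\{0\}=0$, $\alpha(L_0) = 2L_0$, and $[L_0, G_m] = \{m+1\}G_m$, this relation collapses to
\begin{equation*}
-(1+q^{p+1})\{m+1\}\,c_{m,p} \;-\; (1+q^{m+1})\{p+1\}\,c_{p,m} \;+\; 2\{m+p\}\,c_{m,p} \;=\; 0 .
\end{equation*}
Substituting the symmetry $c_{p,m} = c_{m,p}$ factors $c_{m,p}$ out of this identity, leaving $c_{m,p}\cdot X(m,p) = 0$, where
\begin{equation*}
X(m,p) \;=\; 2\{m+p\} - (1+q^{p+1})\{m+1\} - (1+q^{m+1})\{p+1\} .
\end{equation*}

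The final step is a short $q$-arithmetic simplification. From the elementary identity $(1+q^{p+1})(1-q^{m+1}) + (1+q^{m+1})(1-q^{p+1}) = 2(1-q^{m+p+2})$ one gets $(1+q^{p+1})\{m+1\} + (1+q^{m+1})\{p+1\} = 2\{m+p+2\}$, and therefore $X(m,p) = 2\bigl(\{m+p\} - \{m+p+2\}\bigr) = -2q^{m+p}(1+q)$. For generic $q$ (in particular $q \notin \{0,\pm 1\}$) this factor is nonzero, forcing $c_{m,p} = 0$ for every pair $(m,p)$, as desired. The only genuine obstacle is the degenerate value $q = -1$, where $X(m,p)$ vanishes and the $n=0$ slice of the cocycle relation is uninformative; this would require a separate argument (e.g.\ picking a different $n$ to exploit), but is excluded by the generic hypothesis on $q$. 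The assumptions $f(L_{-1}, L_2) = 0$ and $f(L_1, G_n) = 0$ in the lemma statement are not actually invoked in this particular reduction --- they are kept to align with the chain of normalizations set up in Lemmas~\ref{lemma3.3} and~\ref{lemma3.4}, so that all three lemmas can be applied in sequence to a single cocycle representative.
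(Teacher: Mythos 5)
Your proposal is correct and follows essentially the same route as the paper: super-symmetry gives $c_{m,p}=c_{p,m}$, and the $n=0$, $s=0$ specialization of the cocycle identity (\ref{paird}) yields a single nonzero scalar factor (the paper writes it as $2q^{m+p}(q^2-1)$, which is your $-2q^{m+p}(1+q)$ with the $q$-number denominator cleared) forcing $c_{m,p}=0$. Your remark that the unused hypotheses only serve to chain the normalizations, and that $q=-1$ is the one degenerate value, is accurate and consistent with the paper's implicit genericity assumption on $q$.
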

\begin{proof}
By the super skew-symmetry we have $c_{p,m} =c_{m,p} $. In (\ref{paird}) we consider $s=0$ and $n=0$.  This gives
\begin{eqnarray}
-(1+q^{p+1})\{m+1\}c_{m,p}-(1+q^{m+1})\{p+1\}c_{m,p}
+2\{m+p\} c_{m,p}=0.
\end{eqnarray}
Hence,
\begin{equation*}
2q^{m+p}(q^2-1)c_{m,p}=0.
\end{equation*}
This implies $$c_{m,p}=0, \ \forall m,\ p\ \in \mathbb{Z}.$$

\end{proof}
The previous lemmas shows  :
\begin{prop}$$H^2_{0,0}=\{0\}.$$
\end{prop}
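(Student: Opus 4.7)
The plan is to combine the preceding four lemmas into a single reduction argument. Let $f$ be an arbitrary even $2$-cocycle of degree zero on $\mathcal{W}^q$. Our goal is to exhibit a $1$-cochain $g$ such that $f - \delta^1(g) \equiv 0$, which will show that $[f]=0$ in $H^2_{0,0}(\mathcal{W}^q,\mathcal{W}^q)$.

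First, I would apply Lemma~3.3 (the normalization lemma): it constructs an explicit even endomorphism $g$ of $\mathcal{W}^q$, defined by the recursively determined scalars $(a_n)$ and $(b_m)$ on the bases $\{L_n\}$ and $\{G_n\}$, so that the modified cocycle
\[
h := f - \delta^1(g)
\]
satisfies all four normalizations simultaneously: $h(L_n,L_1)=0$ for every $n\in\Z$, $h(L_{-1},L_2)=0$, $h(L_1,G_m)=0$ for every $m\in\Z^*$, and $h(L_{-1},G_1)=0$. Since $\delta^1(g)$ is a $2$-coboundary, $h$ is cohomologous to $f$ and is still a degree-zero even $2$-cocycle, so I can replace $f$ by $h$ from now on.

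Next, I would feed $h$ through Lemmas~3.4, 3.5, and~3.6 in sequence. Lemma~3.4 uses the assumptions $h(L_n,L_1)=0$ and $h(L_{-1},L_2)=0$ together with the cocycle identity~(\ref{pairds}) to force the coefficients $a_{n,p}$ to vanish for all $n,p\in\Z$; that is, $h$ is identically zero on $\mathcal{W}_0^q\times\mathcal{W}_0^q$. With that vanishing in hand, Lemma~3.5 then uses $h(L_1,G_m)=0$ and $h(L_{-1},G_1)=0$ together with identity~(\ref{pairdsd}) to force all $b_{n,p}=0$, so $h$ vanishes on $\mathcal{W}_0^q\times\mathcal{W}_1^q$. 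Finally, Lemma~3.6 uses identity~(\ref{paird}) (with $s=0$, $n=0$) to conclude that all $c_{n,p}=0$, so $h$ vanishes on $\mathcal{W}_1^q\times\mathcal{W}_1^q$ as well.

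At this point $h\equiv 0$ on all of $\mathcal{W}^q\times\mathcal{W}^q$, which means $f=\delta^1(g)\in B^2_{0,0}(\mathcal{W}^q,\mathcal{W}^q)$. Since $f$ was an arbitrary representative of an arbitrary class in $H^2_{0,0}(\mathcal{W}^q,\mathcal{W}^q)$, this space is trivial. The only real obstacle is conceptual rather than technical: one has to verify that the four lemmas can indeed be chained together, i.e.\ that the normalizations of $h$ produced by Lemma~3.3 are precisely the hypotheses required by Lemmas~3.4--3.6, and that the successive vanishing results ($a_{n,p}=0$, then $b_{n,p}=0$, then $c_{n,p}=0$) can be used as inputs in the later steps without reintroducing non-trivial coefficients; this is exactly how the lemmas have been stated and proved above, so the combination is immediate.
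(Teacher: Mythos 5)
Your proposal is correct and is essentially the argument the paper intends: the paper itself only writes ``The previous lemmas shows'' before the proposition, and your chaining --- normalize $f$ to $h=f-\delta^1(g)$ via the construction lemma, then apply the three vanishing lemmas on $\mathcal{W}_0^q\times\mathcal{W}_0^q$, $\mathcal{W}_0^q\times\mathcal{W}_1^q$ and $\mathcal{W}_1^q\times\mathcal{W}_1^q$ in that order to get $h\equiv 0$ --- is exactly the implicit reasoning (note the lemmas' proofs in fact show the coefficients themselves vanish, which is what lets each step feed into the next). The only detail worth mentioning is that the diagonal endomorphism $g$ commutes with $\alpha$, so $\delta^1(g)$ is indeed an admissible coboundary; this is immediate here.
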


%
In the sequel we consider the last case of even $2$-cocycle of degree $2$.
\begin{lem}
Let $f$ be an even  $2$-cocycle of degree $2$ such that $$f(L_n,L_1)=f(L_{-1},L_2)=f(L_{1},G_{n})=f(L_{-1},G_{1})=0.$$ 
 Then the cohomology class of $f$ is trivial.
\end{lem}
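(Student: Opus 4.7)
The plan is to mirror the treatment of the case $s=0$ carried out in the preceding lemmas, with adjustments for the shift of degree. Writing $f(L_n,L_p)=a_{n,p}L_{n+p+2}$, $f(L_n,G_p)=b_{n,p}G_{n+p+2}$, and $f(G_n,G_p)=c_{n,p}L_{n+p+2}$, the cocycle identities \eqref{pairds}, \eqref{pairdsd}, \eqref{paird} with $s=2$ yield recursive relations on the three families of coefficients; the goal is to deduce from the four normalization hypotheses that $a_{n,p}=b_{n,p}=c_{n,p}=0$, so that $f$ itself is the zero cochain (and thus equals the coboundary which was already subtracted to achieve the normalization).

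For the even-even part I would substitute $m=\pm 1$ into \eqref{pairds} with $s=2$. The choice $m=1$, together with $a_{n,1}=0$, yields the analogue of \eqref{cohd},
\[
-(1+q^p)(\{1\}-\{n\})a_{n+1,p}+(1+q^n)(\{p\}-\{1\})a_{n,p+1}-(1+q)(\{n+p+2\}-\{1\})a_{n,p}=0,
\]
and $m=-1$ produces the companion equation. A case analysis for $p=0,\pm 1,\pm 2$ followed by induction on $|p|$ reproduces the six cases of Lemma~\ref{lemma3.3} almost verbatim; only a handful of $q$-numbers are shifted by $2$, and one checks that the leading coefficients in the recursions remain nonzero. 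The even-odd part proceeds analogously via Lemma~\ref{lemma3.4}: substitute $m=\pm 1$ into \eqref{pairdsd} with $s=2$, invoke $b_{1,n}=0$ and $b_{-1,1}=0$, and carry out the same six-case analysis with the appropriate index shifts.

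The real obstacle is the odd-odd part. For $s=0$ the substitution $n=0$ in \eqref{paird} reduced the equation to an overall multiplier $2q^{m+p}(q^2-1)$ killing $c_{m,p}$; however a direct computation shows that for $s=2$ the corresponding coefficient $2\{m+p+2\}-(1+q^{p+1})\{m+1\}-(1+q^{m+1})\{p+1\}$ vanishes identically, so the $n=0$ trick gives no information. My plan is to exploit $n=\pm 1$ instead. Using the identities $\{k+1\}-\{1\}=q\{k\}$ and $\{k\}-\{-1\}=q^{-1}\{k+1\}$, the substitution $n=1$ produces
\[
-(1+q^{p+1})\{m\}c_{m+1,p}-(1+q^{m+1})\{p\}c_{p+1,m}+(1+q)\{m+p+1\}c_{m,p}=0,
\]
and specialising $m=0$ together with the super-symmetry $c_{m,p}=c_{p,m}$ gives the one-step recursion $\{p\}c_{0,p+1}=\{p+1\}c_{0,p}$; in particular $c_{0,0}=0$ follows by taking $p=0$. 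The $n=-1$ substitution evaluated at $m=p=0$ then forces $c_{-1,0}+c_{0,-1}=0$, whence super-symmetry gives $c_{0,-1}=0$, and propagating both recursions in either direction yields $c_{0,p}=0$ for every $p$. Feeding these vanishings back into the $n=\pm 1$ master equations, supplemented when needed by the analogous $n=2$ relation (which uses $\{k\}-\{2\}=q^2\{k-2\}$) to close the small cases $c_{\pm 1,\pm 1}$ and $c_{\pm 1,\pm 2}$, determines $c_{m,p}$ recursively in $|m|$. The delicate step is checking that the recursion coefficients never vanish at the indices encountered and that the $n=1$ and $n=-1$ recursions are consistent on overlapping terms. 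Once all three coefficient families vanish, $f\equiv 0$ and its cohomology class is trivial, completing the proof.
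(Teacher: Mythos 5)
Your reduction of the problem to the vanishing of the three coefficient families is the right frame, and you have correctly spotted the key difficulty: for $s=2$ the $n=0$ specialization of \eqref{paird} is vacuous because $(1+q^{p+1})\{m+1\}+(1+q^{m+1})\{p+1\}=2\{m+p+2\}$. But your treatment of the odd--odd family has a concrete gap at $c_{0,1}$. The $n=1$, $m=0$ relation $\{p\}c_{0,p+1}=\{p+1\}c_{0,p}$ is vacuous at $p=0$ (it only re-proves $c_{0,0}=0$), so forward propagation merely gives $c_{0,p}=\{p\}\,c_{0,1}$ for $p\geq 1$ and leaves $c_{0,1}$ as a free parameter; the supplementary $n=2$ relations you invoke are aimed at $c_{\pm1,\pm1}$ and $c_{\pm1,\pm2}$ and do not close this base case. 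The point matters because an even coboundary $\delta^{1}(g)$ always has vanishing $G$--$G$ component (both $g([G_n,G_p])$ and $[G_n,g(G_p)]$ vanish since $[G,G]=0$), so a nonzero $c_{0,1}$ would yield a genuinely nontrivial class and the lemma would be false; you must kill $c_{0,1}$ from the cocycle identity itself. The paper does this by specializing \eqref{paird} to $p=0$ (equation \eqref{pairdj}) and then taking $n=m=1$, which yields $(1+q)(\{3\}-\{1\})c_{1,0}=0$; in your setup the $n=2$ relation at $m=p=0$ gives $c_{2,0}=0$, and then $c_{0,2}=\{2\}c_{0,1}$ forces $c_{0,1}=0$. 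Either way this step has to be made explicit.

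For the even--even and even--odd parts your plan is workable but takes a much longer road than necessary, and the claim that the six-case analyses of Lemmas \ref{lemma3.3} and \ref{lemma3.4} go through ``almost verbatim'' with $s=2$ is not something you have verified: several cancellations there (for instance the disappearance of $a_{-1,2}$ in Case 1 of Lemma \ref{lemma3.3}) use $s=0$ specifically. The paper instead reuses the coboundary subtraction of Proposition \ref{lemma}: the endomorphism $g(L_{p})=\frac{1}{q^{p}\{s\}}f(L_{0},L_{p})$, $g(G_{p})=\frac{1}{q^{p+1}\{s\}}f(L_{0},G_{p})$ makes sense for every $s\neq 0$, and the computations \eqref{lemma1} and \eqref{hlemma2} already give $h(L_{n},L_{p})=h(L_{n},G_{p})=0$ for $h=f-\delta^{1}(g)$; only the $G$--$G$ step of that proposition required $s\neq 2$. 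Adopting that shortcut collapses the whole lemma to the single coefficient computation discussed above.
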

\begin{proof}
Define $g$ and $h$ as those given in Lemma (\ref{lemma}). Then by (\ref{lemma1}) we have $h(L_n,L_p)=0$ and by (\ref{hlemma2}) we have $h(L_n,G_p)=0$.\\
Letting $s=2$, $p=0$ in (\ref{paird}), we obtain
\begin{eqnarray}
-(1+q)(\{m+1\}-\{n\})c_{n+m,0}-(1+q^{m+1})(\{1\}-\{n\})c_{n,m}
+(1+q^{n})(\{m+2\}-\{n\}) c_{m,0}=0.\nonumber\\
 \label{pairdj}
\end{eqnarray}
Setting $n=1$ and $m=0$ in (\ref{pairdj}), one can deduce $c_{0,0}=0$.\\
Then taking $m=0$ in (\ref{pairdj}), we obtain $c_{n,0}=0$ for $n\neq 1$.\\
Taking $n=1$ and $m=1$ in (\ref{pairdj}), we obtain $c_{1,0}=0$. Then $c_{n,0}=0,\ \forall n\in \mathbb{Z}$.
Using this in (\ref{pairdj}), one has $$c_{n,m}=0,\ \forall n,\ m \in \mathbb{Z}.$$
\end{proof}

\subsection{Second odd cohomology $H^{2}_{1}(\mathcal{W}^{q},\mathcal{W}^{q})$}
Let $H^{2}_{1,s}(\mathcal{W}^{q},\mathcal{W}^{q})$ be the subspace of  second odd cohomology group given by odd $2$-cochains of degree  $s$. 
 Let $f$ be an odd $2$-cocycle of degree $s.$ We can assume that
\begin{equation*}
   f(L_n,L_p)=a_{n,p}G_{s+n+p},\  f(L_n,G_p)=b_{n,p}L_{s+n+p}\ \textrm{ and} \ f(G_n,G_p)=c_{n,p}G_{s+n+p}.\label{impair}
\end{equation*}
Thus, by (\ref{pair}), we have
\begin{eqnarray}\label{1odd}
&& -(1+q^{p})(\{m\}-\{n\})a_{n+m,p}+ (1+q^{m})(\{p\}-\{n\})a_{n+p,m}+(1+q^{n})(\{p\}-\{m\})a_{n,m+p} \nonumber\\
&&+(1+q^{n})(\{m+p+s+1\}-\{n\})a_{m,p}-(1+q^{m})(\{n+p+s+1\}-\{m\})a_{n,p} \nonumber\\
&&+(1+q^{p})(\{n+m+s+1\}-\{p\})a_{n,m} =0.
\end{eqnarray}
By (\ref{pair2}), we obtain
\begin{eqnarray}\label{2odd}
&& -(1+q^{p+1})(\{m\}-\{n\})b_{n+m,p}- (1+q^{m})(\{p+1\}-\{n\})b_{m,n+p}+(1+q^{n})(\{p+1\}-\{m\})b_{n,m+p} \nonumber\\
&&+(1+q^{n})(\{m+p+s\}-\{n\})b_{m,p}-(1+q^{m})(\{n+p+s\}-\{m\})b_{n,p}  =0.
\end{eqnarray}
By (\ref{pair3}), we obtain
\begin{eqnarray}\label{oddjeudi}
 &&-(1+q^{p+1})(\{m+1\}-\{n\})c_{n+m,p}- (1+q^{m+1})(\{p+1\}-\{n\})c_{n+p,m}\nonumber\\
 && +(1+q^{n})(\{m+p+s+1\}-\{n\})c_{m,p}
 -(1+q^{m+1})(\{m+1\}-\{n+p+s\})b_{n,p}\\
 && -(1+q^{p+1})(\{p+1\}-\{n+m+s\})b_{n,m}=0.\nonumber
\end{eqnarray}

\begin{prop}\label{lemme1}\label{20juin}
If $s\neq 1,-1$, the subspace $H^{2}_{1,s}(\mathcal{W}^{q},\mathcal{W}^{q})$ is
trivial.
\end{prop}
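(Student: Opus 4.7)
The plan is to adapt the strategy of Proposition \ref{lemma} (the even case) to the odd setting. Let $f$ be an odd $2$-cocycle of degree $s$, so that $f(L_0,L_p)=a_{0,p}G_{s+p}$ and $f(L_0,G_p)=b_{0,p}L_{s+p}$. I would introduce an odd linear map $g:\mathcal{W}^q\to\mathcal{W}^q$ of degree $s$ by
$$g(L_0)=0,\qquad g(L_p)=\frac{a_{0,p}}{q^{p}\{s+1\}}G_{s+p},\qquad g(G_p)=\frac{b_{0,p}}{q^{p+1}\{s-1\}}L_{s+p},$$
and put $h=f-\delta^1(g)$. The hypothesis $s\neq\pm 1$ is used exactly to guarantee that the denominators $\{s+1\}$ and $\{s-1\}$ do not vanish.

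A direct computation of $\delta^1(g)$ using (\ref{ddd1}), the brackets (\ref{crochet1})--(\ref{crochet2}), and the $q$-number identities
$$\{s+p+1\}-\{p\}=q^{p}\{s+1\},\qquad \{s+p\}-\{p+1\}=-q^{p+1}\{s-1\},$$
shows that the scalars have been calibrated so that $\delta^1(g)(L_0,L_p)=f(L_0,L_p)$ and $\delta^1(g)(L_0,G_p)=f(L_0,G_p)$. Consequently $h$ is a cohomologous odd $2$-cocycle satisfying $h(L_0,L_p)=h(L_0,G_p)=0$ for all $p$, and super-skew-symmetry takes care of the reversed pairs.

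The remainder of the argument propagates this vanishing via the three cocycle identities (\ref{1odd}), (\ref{2odd}), (\ref{oddjeudi}). Setting $m=0$ in (\ref{1odd}) and using the elementary identity $(1+q^p)\{n\}+(1+q^n)\{p\}=2\{n+p\}$, all terms involving $L_0$-slots disappear, and the equation collapses to $-2q^{n+p}\{s+1\}\,h(L_n,L_p)=0$, forcing $h(L_n,L_p)=0$ since $s\neq -1$. Substituting $m=0$ in (\ref{2odd}) analogously produces the coefficient $-2q^{n+p+1}\{s-1\}$ in front of $h(L_n,G_p)$, giving $h(L_n,G_p)=0$ when $s\neq 1$. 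Finally, setting $n=0$ in (\ref{oddjeudi}), dropping the $b$-contributions (already zero), and using the super-symmetry $h(G_p,G_m)=h(G_m,G_p)$ to combine the two $c$-terms, one obtains $2q^{m+p+2}\{s-1\}\,h(G_m,G_p)=0$, whence $h(G_m,G_p)=0$. Thus $h\equiv 0$, which means $f=\delta^1(g)$ is a coboundary.

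The main obstacle I anticipate is purely bookkeeping: tracking the extra signs arising from $|g|=1$ in the Koszul rule defining $\delta^1$, and keeping the $q$-number arithmetic straight — in particular in (\ref{oddjeudi}), where both $c$- and $b$-coefficients appear simultaneously and the $G$--$G$ super-symmetry must be applied before the coefficients combine into a single factor. Once the three reduction identities above are verified, the conclusion that $H^{2}_{1,s}(\mathcal{W}^{q},\mathcal{W}^{q})=\{0\}$ for $s\neq\pm 1$ is immediate.
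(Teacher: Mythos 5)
Your proposal follows essentially the same route as the paper's own proof: the same correcting map $g$ (with the $\{s+1\}$ and $\{s-1\}$ denominators, which is exactly where $s\neq\pm 1$ enters), the same normalization $h=f-\delta^1(g)$ killing the $L_0$-slots, and the same collapse of the three cocycle identities at $m=0$ (resp.\ $n=0$) to the coefficients $-2q^{n+p}\{s+1\}$, $-2q^{n+p+1}\{s-1\}$ and $2q^{m+p+2}\{s-1\}$. The only slip is the sign in your auxiliary identity: $\{s+p\}-\{p+1\}=+q^{p+1}\{s-1\}$, not $-q^{p+1}\{s-1\}$; with the correct sign the calibration $\delta^{1}(g)(L_{0},G_{p})=f(L_{0},G_{p})$ holds exactly as you intend, matching the paper.
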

\begin{proof}
We define an endomorphism $g$ of $\mathcal{W}^{q}$ by
\begin{equation*}
   g(L_{p})=\frac{1}{q^{p}\{s+1\}}f(L_{0},L_{p})  \textrm{ and} \ g(G_{p})=\frac{1}{q^{p+1}\{s-1\}}f(L_{0},G_{p}).
\end{equation*}
By (\ref{1cochain}) and (\ref{crochet1}) we have
\begin{equation*}
\delta^{1}(g)(L_{0},L_{p}) =-\{p\}g(L_{p})+\{p+s+1\}g(L_{p}),
\end{equation*}
so
\begin{equation*}
\delta^{1}(g)(L_{0},L_{p})=q^{p}\{s+1\}g(L_{p}).
\end{equation*}
We define a $2$-cocycle $h$ by
\begin{equation*}
h=f-\delta ^{1} (g).
\end{equation*}
Therefore
 \begin{equation}
h(L_{0},L_{p})=f(L_{0},L_{p})-\delta^{1}(g)(L_{0},L_{p})
=q^{p}\{s+1\}g(L_{p})-q^{p}\{s+1\}g(L_{p})=0.\label{un}
\end{equation}
 Taking $m=0$ in (\ref{pair}), with (\ref{crochet1})and (\ref{alpha1}),    we obtain
\begin{eqnarray}
0&=&(1+q^{p})\{n\}f(L_{n},L_{p})+2(\{p\}-\{n\})f(L_{n+p},L_{0})+(1+q^{n})\{p\}f(L_{n}, L_{p})\nonumber\\
&&+(1+q^{n})[L_{n},f(L_{0},L_{p})]-2[L_{0},f(L_{n},L_{p})]+(1+q^{p})[L_{p},f(L_{n},L_{0})] \label{marss}.
\end{eqnarray}
Since $h$ is a $2$-cocycle,  we can replace $f$ by $h$ in (\ref{marss}), and using (\ref{un}) we obtain
\begin{eqnarray*}
0=(1+q^{p})\{n\}h(L_{n},L_{p})+(1+q^{n})\{p\}h(L_{n}, L_{p})-2\{n+p+s+1\}h(L_{n},L_{p}).
\end{eqnarray*}
From this, using  the fact $s\neq -1$, we obtain
 $$h(L_{n},L_{p})=0\ \forall \ n,p\in \mathbb{Z}.$$
 Since  $g(G_{p})=\frac{1}{q^{p+1}\{s-1\}}f(L_{0},G_{p})$. By (\ref{1cochain2}) and (\ref{crochet2}) we have
\begin{equation*}\label{}
  \delta^{1}(g)(L_{0},G_{p})=-\{p+1\}g(G_{p})+\{p+s\}g(G_{p})=q^{p+1}\{s-1\}g(G_{p}).
\end{equation*}
Then
\begin{equation*}
h(L_{0},G_{p})=f(L_{0},G_{p})-\delta^{1}(g)(L_{0},G_{p})
=0.
\end{equation*}
Taking $m=0$ in (\ref{pair2}),\ by (\ref{crochet1}), (\ref{crochet2}), (\ref{alpha1}) and(\ref{alpha2}) we obtain
\begin{eqnarray*}
&&(1+q^{p+1})\{n\}f(L_{n},G_{p})+2(\{p+1\}-\{n\})f( G_{p+n},L_{0})+(1+q^{n})\{p+1\}f(L_{n}, G_{p})\\
&&+(1+q^{n})[L_{n},f(L_{0},G_{p})]-2[L_{0},f(L_{n},G_{p})]+(-1)^{|f|}(1+q^{p+1})[G_{p},f(L_{n},L_{0})]=0.
\end{eqnarray*}
Since $h$ is a $2$-cocycle and $h(L_{0},G_{p})=h(L_{0},L_{n})=0$, we can deduce that
\begin{eqnarray*}
(1+q^{p+1})\{n\}h(L_{n},G_{p})+(1+q^{n})\{p+1\}h(L_{n},G_{p})-2\{p+n+s\}h(L_{n},G_{p})=0,
\end{eqnarray*}
which implies that
 $h(L_{n},G_{p})=0$ under the condition $s\neq1$.\\
Taking $n=0$ in (\ref{pair3}), since $[G_{m},G_{p}]=h(L_{n},G_{p})=h(L_{n},L_{p})=0$ and $h$ is a $2$-cocycle we have
\begin{eqnarray*}
-h([L_{0},G_{m}],\alpha(G_{p}))-h([L_{0},G_{p}],\alpha(G_{m}))+[\alpha(L_{0}),h(G_{m},G_{p})]=0.
\end{eqnarray*}
Then
\begin{equation*}
- (1+q^{p+1})\{m+1\} h( G_{m},G_{p}) - (1+q^{m+1})\{p+1\}h(G_{p},G_{m})+2\{m+p+s+1\}h(G_{m},G_{p})=0.
\end{equation*}
So, under the condition $s\neq 1$,  $h( G_{m},G_{p})=0$.\\
We deduce that $h\equiv 0$. Hence $f$ is a coboundary.
\end{proof}
\begin{lem}
Let $f$  be an  odd $2$-cocycle of degree one and  $g$ be an odd  endomorphism  of $\mathcal{W}^{q}$.\\ If $h=f-\delta^{1}(g)$
then for all $n\in \mathbb{Z},\ m\in \mathbb{Z}^{*}$, we have $$h(L_{n},L_1)=0,\ h(L_1,G_m)=0,\ h(L_{2},L_{-1})=0, \ \textrm{and} \ h(L_{-1},G_{1})=0.$$
\end{lem}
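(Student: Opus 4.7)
The plan is to mimic the construction of the even-case lemma by building a specific odd endomorphism $g$ of $\mathcal{W}^{q}$ of the form $g(L_n)=a_n G_{n+1}$ and $g(G_n)=b_n L_{n+1}$, choosing the scalar sequences $(a_n)_{n\in\mathbb{Z}}$ and $(b_n)_{n\in\mathbb{Z}}$ so that the four prescribed values of $h=f-\delta^{1}(g)$ all vanish. Since $f$ is odd of degree one, we may write $f(L_n,L_m)=f_{n,m}G_{n+m+1}$ and $f(L_n,G_m)=f'_{n,m}L_{n+m+1}$. Using \eqref{cobound1} together with the brackets \eqref{crochet1}--\eqref{crochet2}, a direct computation expresses the conditions $h(L_n,L_1)=0$ and $h(L_1,G_m)=0$ as the linear equations
\begin{align*}
(\{1\}-\{n\})a_{n+1}+(\{n+2\}-\{1\})a_n-(\{3\}-\{n\})a_1 &= -f_{n,1},\\
(\{m+1\}-1)(b_{m+1}-b_m) &= -f'_{1,m},
\end{align*}
supplemented by two one-off relations obtained from $h(L_{-1},L_2)=0$ and $h(L_{-1},G_1)=0$, the first involving $(a_1,a_{-1},a_2)$ and the second involving $(b_0,b_1)$.

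The key new phenomenon, not present in the even analogue, is a double degeneracy of the $a$-equation at the indices $n=\pm1$. At $n=1$ the equation is automatically $0=0$ since $f_{1,1}=0$ by antisymmetry and $\{1\}-\{1\}=0$, so $a_2$ is not fixed by this recursion. At $n=-1$ the coefficient $\{n+2\}-\{1\}$ of $a_{-1}$ also vanishes, so the equation does not constrain $a_{-1}$ but instead becomes a further linear relation between $a_0$ and $a_1$. Combined with the equation at $n=0$, this yields a $2\times 2$ linear system for $(a_0,a_1)$ whose determinant works out to
$$(\{2\}-1)(\{3\}-\{-1\})-(\{1\}-\{-1\})(\{3\}-\{1\})=q(q+1)(q-1),$$
which is nonzero for the admissible parameters $q\in\mathbb{C}\setminus\{0,\pm 1\}$; hence $(a_0,a_1)$ is uniquely solved for.

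Once $(a_0,a_1)$ is fixed, I treat $a_{-1}$ as a free parameter (taking $a_{-1}=0$, say), use the backward form of the recursion to produce $a_{-2},a_{-3},\ldots$ from $a_{-1}$, use $h(L_{-1},L_2)=0$ to determine $a_2$ in terms of $(a_{-1},a_1)$, and use the forward recursion to obtain $a_3,a_4,\ldots$ from $a_2$. In parallel, I choose $b_0$ freely, use $h(L_{-1},G_1)=0$ to obtain $b_1=b_0+f'_{-1,1}/(\{2\}-\{-1\})$, and propagate $b_{m+1}=b_m-f'_{1,m}/(\{m+1\}-1)$ to all $m\in\mathbb{Z}$. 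The resulting odd endomorphism $g$ produces an $h=f-\delta^{1}(g)$ with the four prescribed vanishings. The only truly nontrivial step is the nondegeneracy of the $2\times 2$ system above, settled by the explicit factorisation $q(q+1)(q-1)$; the remainder is linear-algebraic bookkeeping parallel to the even case, keeping careful track of the shifts induced by the odd degree.
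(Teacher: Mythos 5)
Your construction is essentially the paper's own proof: the authors define the same odd endomorphism $g(L_n)=a_nG_{n+1}$, $g(G_n)=b_nL_{n+1}$, with $a_{-1}=0$, $b_0=0$, the forward/backward recursions you describe, the separate determination of $a_2$ from $h(L_2,L_{-1})=0$ and of $b_1$ from $h(L_{-1},G_1)=0$, and closed-form values of $a_0,a_1$ that agree exactly with the unique solution of your $2\times 2$ system (whose determinant $q(q-1)(q+1)$ you compute correctly). The only difference is expository: you make explicit the double degeneracy at $n=\pm1$ and the resulting solvability condition $q\neq\pm1$, which the paper leaves implicit in the denominators $q^2-1$ of its formulas for $a_0$ and $a_1$.
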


\begin{proof}
  Since $f$ is an   odd $2$-cocycle of degree $1$, we can assume that $f(L_{n},L_{m})=f_{n,m}G_{n+m+1}$ and  $f(L_{n},G_{m})=f_{n,m}^{'}L_{n+m+1}$ .\\
Let  $(a_{n})_{n\in\mathbb{Z}}$  be  the sequence  given recursively by
\begin{eqnarray*}
   && a_{-1}=0,\\
    && a_{1}=\frac{1}{q^2-1}f_{-1,1}+\frac{1}{q^{2}-q^3}f_{0,1},\\
&&a_{n}=\frac{1}{\{n+2\}-\{1\}}\big(-f_{n,1}+(\{3\}-\{n\})a_{1}+(\{n\}-\{1\})a_{n+1}\big)  \ \ \forall n<-1,\\
    &&a_0=\frac{1}{q-1}f_{-1,1}+\frac{1+q^{-2}}{1-q}f_{0,1},\\
    && a_{2}=\frac{1-q}{q^{4}-q^{-1}}f_{2,-1}+\frac{q^{3}-1}{(q^{5}-1)(q^2-1)}f_{-1,1}-\frac{q^{3}-1}{(q^{5}-1)(q^{3}-q^2)}f_{0,1},
\\
&&
    a_{n+1}=\frac{1}{\{n\}-\{1\}}\big(f_{n,1}-(\{3\}-\{n\})a_{1}+(\{n+2\}-\{1\})a_{n}\big),\ \ \ \forall n>1.
\end{eqnarray*}
 Let  $(b_{m})_{m\in\mathbb{Z}}$  be  the sequence  given recursively by
 \begin{eqnarray*}
  && b_{1}=\frac{1}{\{2\}-\{-1\}}f'_{-1,1},\\
  &&b_0=0,\\
&&b_{m}=b_{m+1}+\frac{1}{\{m+1\}-\{1\}}f_{1,m}^{'}  \ \ \forall m<0,
   \\
&&b_{m+1}=b_{m}+\frac{1}{\{1\}-\{m+1\}} f_{1,m}'  \ \ \forall m>0.
   \end{eqnarray*}
Let $g$ be an odd   endomorphism  of  degree $1$ of $\mathcal{W}^{q}$ given recursively by
$g(L_{n})=a_{n}G_{n+1}$ and $g(G_{n})=b_{n}L_{n+1}.$\\
By (\ref{1cochain}) and (\ref{1cochain2}) we have  recursively
 \begin{eqnarray*}
\delta^{1}(g)(L_{n},L_{m})=\bigg((\{n\}-\{m\})a_{n+m}+(\{m+2\}-\{n\})a_{m}-(\{n+2\}-\{m\})a_{n}\bigg)G_{n+m+1},
  \end{eqnarray*}
  and
   \begin{eqnarray}
\delta^{1}(g)(L_{n},G_{m})=-(\{m+1\}-\{n\})(b_{n+m}-b_{m})L_{n+m+1}.
  \end{eqnarray}
   If $h=f-\delta^{1}(g)$ we have
  \begin{eqnarray*}
&& h_{n,1}=f_{n,1}-\big((\{n\}-\{1\})a_{n+1}+(\{3\}-\{n\})a_{1}-(\{n+2\}-\{1\})a_{n}\big)=0\ \forall n\in \mathbb{Z},\\
&& h_{2,-1}=f_{2,-1}-\big((\{2\}-\{-1\})a_{1}+(\{1\}-(\{2\})a_{-1}-(\{4\}-\{-1\})a_{2}\big)=0,\\
&& h_{1,m}^{'}=f_{1,m}^{'}+(\{m+1\}-\{1\})(b_{m+1}-b_{m})=0\ \forall m\in \mathbb{Z}^{*},\\
&& h_{-1,1}^{'}=f_{-1,1}^{'}+\{2\}(b_{0}-b_{1})=0 .
     \end{eqnarray*}

\end{proof}
\begin{lem}
Let $f$ be an odd $2$-cocycle of degree one such that $f(L_{n},L_{1})=0$ and $f(L_{-1},L_{2})=0$. 
Then the cohomology class of  $f$ is trivial   on the space $\mathcal{W}_{0}^{q}\times \mathcal{W}_{0}^{q}$.
\end{lem}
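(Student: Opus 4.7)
The plan is to mirror the proof of Lemma \ref{lemma3.3} with the obvious shifts forced by $s=1$. Writing $f(L_n,L_p) = a_{n,p}G_{n+p+1}$ and using the super-antisymmetry $a_{n,p} = -a_{p,n}$, the hypotheses become $a_{n,1} = 0 = a_{1,n}$ for every $n\in\Z$ and $a_{-1,2} = 0 = a_{2,-1}$. Inserting this ansatz into the cocycle identity (\ref{1odd}) with $s=1$ produces the six-term relation
\begin{eqnarray*}
0 &=& -(1+q^{p})(\{m\}-\{n\})a_{n+m,p} + (1+q^{m})(\{p\}-\{n\})a_{n+p,m} + (1+q^{n})(\{p\}-\{m\})a_{n,m+p} \\
&& +(1+q^{n})(\{m+p+2\}-\{n\})a_{m,p} - (1+q^{m})(\{n+p+2\}-\{m\})a_{n,p} + (1+q^{p})(\{n+m+2\}-\{p\})a_{n,m}.
\end{eqnarray*}
Specializing $m=1$ annihilates three terms and yields the three-term recursion
\begin{equation*}
-(1+q^{p})(\{1\}-\{n\})a_{n+1,p} + (1+q^{n})(\{p\}-\{1\})a_{n,p+1} - (1+q)(\{n+p+2\}-\{1\})a_{n,p} = 0,
\end{equation*}
while specializing $m=-1$ gives the analogous recursion of Lemma \ref{lemma3.3}-type, now involving $a_{n,-1}$-terms that are not automatically zero.

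I would then perform the case analysis by the level $k$ (the second subscript of $a_{n,p}$), running $k = 0,-1,-2,2$ in order exactly as in Lemma \ref{lemma3.3}. In each case, specializing the appropriate recursion to $p=k$ produces a two-step recursion in $n$ that, together with the antisymmetry-induced seed $a_{k,k} = 0$ and the standing hypotheses, propagates zeros across most indices, leaving one ``orphan'' coefficient (such as $a_{0,2}$, $a_{-1,0}$, or $a_{2,-2}$) undetermined at the level of the recursion. Each orphan is then killed by choosing a bespoke triple of indices in the six-term relation for which all other surviving coefficients have already been shown to vanish; the natural candidate for the critical orphan $a_{2,-2}$ is the triple $(n,m,p) = (2,-2,4)$, which should leave only one non-vanishing multiple of $a_{2,-2}$ with coefficient $(1+q^{4})(\{2\}-\{4\}) \neq 0$. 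For $k > 2$ and $k < -2$ the $m=\pm 1$ recursions become clean inductions in $n$, since their leading coefficients $(1+q^{p})(\{1\}-\{k\})$ and $(1+q^{p})(\{-1\}-\{k\})$ do not vanish for generic $q\in\C\setminus\{0,1\}$.

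The main obstacle will be verifying that the degree shift $s=1$ does not accidentally make a critical $q$-number vanish, either in the recursion coefficients or in the orphan-killing triples: every term $\{m+p+s+1\}$ has shifted from $\{m+p\}$ (in the even $s=0$ case) to $\{m+p+2\}$, so the corner cases and the choice of auxiliary triples must be re-checked one by one, and I expect several of the explicit closed-form expressions of the type appearing in Lemma \ref{lemma3.4} to become more intricate. Once this bookkeeping is completed and every $a_{n,p}$ is shown to vanish, the restriction of $f$ to $\mathcal{W}_0^q \times \mathcal{W}_0^q$ is identically zero, hence trivially cohomologous to zero on that subspace.
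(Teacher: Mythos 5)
Your plan cannot succeed as stated, because its endpoint --- ``every $a_{n,p}$ is shown to vanish'' --- is false for odd cocycles of degree one: the hypotheses $f(L_n,L_1)=0$ and $f(L_{-1},L_2)=0$ do \emph{not} force $f$ itself to vanish on $\mathcal{W}_0^q\times\mathcal{W}_0^q$, only its class. Concretely, take an odd endomorphism of degree one with $g(L_n)=a_nG_{n+1}$, $g(G_n)=0$. Imposing $\delta^1(g)(L_n,L_1)=0$ for all $n$ forces $a_0=a_1=0$ (the conditions at $n=0$ and $n=-1$ read $a_0=(1+q)a_1$ and $a_0=(1+q^2)a_1$), and together with $\delta^1(g)(L_{-1},L_2)=0$ leaves a one-parameter family, say $a_2$ free with $a_{-1}=-\{5\}q^{-2}a_2$ and the remaining $a_n$ determined recursively. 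For such $g$ one computes $\delta^1(g)(L_0,L_2)=(\{4\}-\{2\})a_2G_3=q^2\{2\}a_2G_3\neq0$. Since $\delta^1(g)$ is a $2$-cocycle satisfying all of your normalizations, no choice of auxiliary index triples in the six-term relation \eqref{1odd} can kill the surviving ``orphan'' coefficients (such as $a_{0,2}$ or $a_{-1,0}$): they are genuinely free. Your guiding analogy is also off target: the odd case whose cocycle equation literally reproduces \eqref{formul 1}, and hence genuinely requires the Lemma~\ref{lemma3.3}-type analysis, is $s=-1$ (where $\{m+p+s+1\}=\{m+p\}$), not $s=1$.

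What you are missing is that for $s=1$ the \emph{generic} argument of Proposition~\ref{lemme1} still works on the even--even part, precisely because $\{s+1\}=\{2\}\neq0$. This is the paper's (three-line) proof: set $g(L_p)=\frac{1}{q^p\{2\}}f(L_0,L_p)$ and $g(G_p)=0$, put $h=f-\delta^1(g)$, so that $h(L_0,L_p)=0$; then the $m=0$ specialization of \eqref{pair} collapses to
\[
\Bigl((1+q^p)\{n\}+(1+q^n)\{p\}-2\{n+p+2\}\Bigr)h(L_n,L_p)=-2q^{n+p}\{2\}\,h(L_n,L_p)=0,
\]
whence $h\equiv0$ on $\mathcal{W}_0^q\times\mathcal{W}_0^q$ and the class of $f$ is trivial there. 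The heavy recursion analysis you propose is needed only when this divisor degenerates, namely in the even case $s=0$ and the odd case $s=-1$.
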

\begin{proof}
We consider the linear map $g$ defined by $g(L_n)=\frac{1}{q^n\{2\}}$ and $\displaystyle g(G_n)=0$ and argue as in proof of Lemma \ref{lemme1} ,    

\end{proof}

\begin{lem}
Let $f$ be an odd $2$-cocycle of degree one  such that  $f(L_{-1},G_{1})=f(L_{2},L_{-1})=0$ and $f(L_{1},L_n)=f(L_{1},G_{m})=0,\ \forall n\in \mathbb{Z},\ m \in \mathbb{Z}^*$.
Then the cohomology class of $f$ is trivial  on the space $\mathcal{W}_{0}^{q}\times\mathcal{W}_{1}^{q}$.
\end{lem}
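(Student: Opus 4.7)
The plan is to mirror the strategy of Lemma~\ref{lemma3.4} with $s=1$ in place of $s=0$. The crucial structural observation is that in the odd case, identity (\ref{pair2}) reduces to (\ref{2odd}), a relation involving \emph{only} the coefficients $b_{n,p}$: the term $(-1)^{|f|}[\alpha(G_p),f(L_n,L_m)]$ vanishes because, for an odd cocycle, $f(L_n,L_m)=a_{n,m}G_{n+m+s+1}$ is a $G$-term and $[G,G]=0$. Consequently, there is no coupling between the $a$- and $b$-coefficients, and the triviality of the restriction of $f$ to $\mathcal{W}_0^q\times\mathcal{W}_1^q$ can be extracted from (\ref{2odd}) alone, using the hypotheses $b_{1,m}=0$ for $m\in\mathbb{Z}^*$ and $b_{-1,1}=0$ as the initial data.

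First I would set $m=1$ in (\ref{2odd}) with $s=1$ to obtain, after substituting $b_{1,k}=0$ for $k\ne 0$, a three-term recursion of the shape
$$A(n,p)\,b_{n+1,p}+B(n,p)\,b_{n,p+1}+C(n,p)\,b_{n,p}=0,$$
with coefficients expressed in $q$-numbers, valid whenever $p\ne 0$ and $n+p\ne 0$. I would then run through the same cases as in Lemma~\ref{lemma3.4}: fixing $n=0$ and combining the recursion with an auxiliary specialization of (\ref{2odd}) (for example $(m,p)=(0,1)$) forces $b_{0,p}=0$ for every $p$; fixing $n=-1$ and using $b_{-1,1}=0$ pins down $b_{-1,p}$ in terms of a single residual parameter (which plays the role of $b_{1,0}$ in Lemma~\ref{lemma3.4}); the cases $n=\pm 2$ are handled next, expressing $b_{\pm 2,p}$ in terms of that same parameter plus a small number of residual values.

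Next, I would close the system with a carefully chosen index triple playing the role of $(2,-2,4)$ in Lemma~\ref{lemma3.4}: substituting it into (\ref{2odd}) produces a second, independent expression for one of the residual values $b_{2,p_0}$, and comparison forces the single residual parameter to vanish. Once it is zero, all previously obtained formulas collapse to $b_{k,p}=0$ for $|k|\le 2$. Finally, an upward induction on $|k|$ using the three-term recursion (whose leading coefficients $A(n,p)$ and the coefficient seen when $m=-1$ is substituted instead of $m=1$ stay nonzero as soon as $|k|>2$ and $q$ is generic) propagates the vanishing to $b_{k,p}=0$ for all $k,p\in\mathbb{Z}$.

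The main obstacle is purely computational: verifying in the $k=\pm 2$ step that the over-determined linear system produced by several specializations of (\ref{2odd}) is consistent only with the trivial solution, and, throughout the induction, checking that no coefficient $(1+q^{p+1})(\{1\}-\{k\})$ or analogue accidentally vanishes. Since $q\in\mathbb{C}\setminus\{0,1\}$ is assumed, all relevant $q$-numbers $\{r\}$ with $r\ne 0$ remain nonzero for generic $q$ and the argument goes through, exactly parallel to the even degree-zero case.
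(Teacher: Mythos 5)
Your proposal is correct and follows essentially the same route as the paper: the paper's own proof simply observes that setting $s=1$ in (\ref{2odd}) reproduces equation (\ref{cobbbss}) verbatim (your remark that the $a$-terms drop out because $[G,G]=0$ is exactly why), and then defers to the case analysis of Lemma \ref{lemma3.4}. Your expanded sketch of that case analysis matches the paper's argument, so no further comment is needed.
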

\begin{proof}
Let $f$ be an odd
$2$-cocycle  of degree one. Then using (\ref{2odd}) we obtain exactly the same equation as  \eqref{cobbbss}. Therefore the proof is similar to Lemma \ref{lemma3.4}.

\end{proof}

\begin{lem}
Let $f$ be an odd $2$-cocycle of degree one such that $f(L_{2},L_{-1})=0$,   $f(L_{-1},G_{1})=0$,    $f(L_{1},L_{n})=0\ \forall n \in \mathbb{Z}$ and       $f(L_{1},G_{m})=0,\ \forall m\in \mathbb{Z}^{*}$.
Then the cohomology class of $f$ is trivial.
\end{lem}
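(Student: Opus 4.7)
The plan is to show that, after invoking the preceding lemmas of this subsection, the cocycle $f$ can be taken to satisfy $a_{n,m}=b_{n,m}=0$, reducing the problem to showing that the remaining coefficients $c_{n,m}$, defined by $f(G_n,G_m)=c_{n,m}G_{n+m+1}$, all vanish. The argument then mirrors the final even lemma of degree $s=2$. By the super skew-symmetry one has $c_{n,m}=c_{m,n}$, and the single input identity will be the $(L_n,G_m,G_p)$-cocycle relation.

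First I would substitute $b_{n,p}=0$ and $s=1$ into the cocycle identity \eqref{oddjeudi}, obtaining
\begin{equation*}
-(1+q^{p+1})(\{m+1\}-\{n\})c_{n+m,p}-(1+q^{m+1})(\{p+1\}-\{n\})c_{n+p,m}+(1+q^{n})(\{m+p+2\}-\{n\})c_{m,p}=0,
\end{equation*}
which is structurally identical to equation \eqref{pairdj} in the even $s=2$ case. The same bookkeeping can therefore be transplanted almost verbatim. Note that the superficially appealing substitution $n=0$ used in the analogous even $s=0$ case does \emph{not} work here, because a direct computation shows the resulting coefficient of $c_{m,p}$ collapses to $0$; one must instead specialize a different index.

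I would specialize to $p=0$ and run through a short list of cases. The pair $(n,m)=(1,0)$ kills both the $c_{n+m,p}$ and $c_{n+p,m}$ terms and leaves $c_{0,0}=0$; keeping $m=0$ with $n\neq 1$ then yields $c_{n,0}=0$ for all $n\neq 1$; finally, $(n,m)=(1,1)$ combined with $c_{2,0}=0$ produces $c_{1,0}=0$, so $c_{n,0}=0$ for every $n\in\mathbb{Z}$. Substituting this back into the $p=0$ equation collapses it to $-(1+q^{m+1})(\{1\}-\{n\})c_{n,m}=0$, forcing $c_{n,m}=0$ for $n\neq 1$, and symmetry gives $c_{1,m}=0$ for $m\neq 1$. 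The single remaining coefficient $c_{1,1}$ is handled by taking $(n,m,p)=(1,1,1)$ in the main equation: since $c_{2,1}=0$, the identity collapses to $(1+q)(\{4\}-\{1\})c_{1,1}=0$, forcing $c_{1,1}=0$. Combined with the vanishing of $a$ and $b$, this shows $f\equiv 0$, so its cohomology class is trivial.

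The main obstacle I expect is not conceptual but careful bookkeeping: at each specialization one must verify that the factors one divides through by, namely $(1+q^k)$ and $\{k\}-\{j\}$ for specific values of $k,j$, are nonzero. Under the standing convention $q\in\mathbb{C}\setminus\{0,1\}$ (and generic, i.e.\ not a root of unity), each such factor is invertible and the recursion closes cleanly.
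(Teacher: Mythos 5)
Your proposal is correct and follows essentially the same route as the paper: reduce to the coefficients $c_{n,m}$ via the preceding lemmas, specialize the $(L_n,G_m,G_p)$ cocycle identity with $s=1$ and $b\equiv 0$ to $p=0$, kill $c_{n,0}$ first and then all $c_{n,m}$ except $c_{1,1}$, and finish with one extra specialization (you use $(1,1,1)$ where the paper uses $(2,1,1)$, and $(n,m)=(1,0)$ where the paper uses $(1,-1)$ for $c_{0,0}$ --- immaterial variations, both requiring the same genericity of $q$ that the paper tacitly assumes throughout).
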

\begin{proof}
Since $f$ is an odd
$2$-cocycle  of degree one, we can assume that
\begin{eqnarray*}
f(G_{m},G_{n})=f(G_{n},G_{m})&=&c_{n,m}G_{n+m+1}.
\end{eqnarray*}

Since $b_{n,k}=0,\ s=1$, then (\ref{pair3}) can be written 
\begin{eqnarray}\label{soddjeudi}
&& -(1+q^{p+1})(\{m+1\}-\{n\})c_{n+m,p}- (1+q^{m+1})(\{p+1\}-\{n\})c_{n+p,m}\nonumber\\&&+(1+q^{n})(\{m+p+2\}-\{n\})c_{m,p} =0.
\end{eqnarray}
Therefore, if $p=0$, we obtain
\begin{equation}\label{soddc}
    -(1+q)(\{m+1\}-\{n\})c_{n+m,0}-(1+q^{m+1})(\{1\}-\{n\})c_{n,m}+(1+q^{n})(\{m+2\}-\{n\})c_{m,0}=0.
\end{equation}
Taking $n=1$, $m=-1$ in (\ref{soddc}), we obtain $c_{0,0}=0$.\\
Taking  $m=0$ in (\ref{soddc}), we obtain (with $c_{0,0}=0$)
$
    c_{n,0}=0, \ \forall n\neq 1.
$ 
Taking $n=1$,  $m=1$ in (\ref{soddc}), we obtain (with $c_{2,0}=0$ )
$
    c_{1,0}=0.
$ 
We deduce that
\begin{equation}\label{}
 c_{n,0}=0\   \forall  n\in \mathbb{Z}.
\end{equation}

 Using this in (\ref{soddc}), we get
 $c_{n,m}=0,\  \forall (n,m)\neq (1,1).$\\
Taking $n=2$,  $m=1$, $p=1$  in (\ref{soddjeudi}), we obtain $c_{1,1}=0$.

\end{proof}
Finally,
\begin{prop}
$$ H^2_{1,1}=\{ 0\}.$$
\end{prop}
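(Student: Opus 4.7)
The plan is to assemble the preceding four lemmas into a direct deduction. Given any odd $2$-cocycle $f$ of degree $1$, I will first invoke the normalization lemma (the one constructing an odd endomorphism $g$ via the recursively defined sequences $(a_n)_{n\in\mathbb{Z}}$ and $(b_m)_{m\in\mathbb{Z}}$) to produce $g$ such that $h := f - \delta^1(g)$ satisfies
\[
h(L_n, L_1) = h(L_{-1}, L_2) = h(L_1, G_m) = h(L_{-1}, G_1) = 0
\]
for all $n \in \mathbb{Z}$ and all $m \in \mathbb{Z}^*$. This is the right normalization to use: the direct attempt to force $h(L_0, L_p) = h(L_0, G_p) = 0$ (which worked in Proposition \ref{20juin} for $s \neq \pm 1$) breaks down at $s = 1$ because the denominator $\{s-1\}$ vanishes, so one must instead kill the values on the pairs listed above.

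Next, I apply in turn the three lemmas that established, under those vanishing hypotheses, the triviality of the cohomology class of $f$ on $\mathcal{W}_0^q \times \mathcal{W}_0^q$, then on $\mathcal{W}_0^q \times \mathcal{W}_1^q$, and finally on $\mathcal{W}_1^q \times \mathcal{W}_1^q$. In terms of the coefficients these say $a_{n,p} = 0$, $b_{n,p} = 0$ and $c_{n,p} = 0$ for all $n, p \in \mathbb{Z}$, so $h \equiv 0$ identically. Hence $f = \delta^1(g)$ is a coboundary, and since $f$ was arbitrary, $H^2_{1,1}(\mathcal{W}^q, \mathcal{W}^q) = \{0\}$.

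The main obstacle is not at the level of this proposition, since once the three preceding structural lemmas are in hand the conclusion is immediate. The real technical difficulty was absorbed into the second and third of those lemmas: the extensive propagation arguments that use the cocycle equations (\ref{1odd}), (\ref{2odd}) with carefully chosen index triples $(n,m,p)$, together with the auxiliary triple $(2,-1,-3)$ needed to force $b_{1,0} = 0$. At the level of this final proposition I need only verify that the normalized $h$ satisfies the hypotheses of each of the three lemmas; this is a direct inspection of the normalization lemma's conclusions, after which the three lemmas cascade and yield $h \equiv 0$.
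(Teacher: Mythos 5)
Your proof is correct and follows exactly the paper's intended argument: the proposition is stated there without a separate proof precisely because it is the immediate cascade of the normalization lemma followed by the three triviality lemmas on $\mathcal{W}_0^q\times\mathcal{W}_0^q$, $\mathcal{W}_0^q\times\mathcal{W}_1^q$ and $\mathcal{W}_1^q\times\mathcal{W}_1^q$. Your remark on why the $s=1$ case cannot use the generic normalization of Proposition \ref{20juin} (the vanishing of $\{s-1\}$) is also accurate.
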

Now we consider odd cocycles of degree $-1$.
\begin{lem}
Let $f$  be an  odd $2$-cocycle of degree $-1$ and  $g$ be an odd  endomorphism  of $\mathcal{W}^{q}$.\\ If $h=f-\delta^{1}(g)$
then $h(L_{1},L_n)=0,\ h(L_1,G_m)=0,\ h(L_{-1},G_1)=0\  \textrm{and}\ h(L_{2},L_{-1})=0.$ \
\end{lem}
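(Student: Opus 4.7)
The plan is to adapt the degree $+1$ construction to the degree $-1$ situation. Since $f$ is odd of degree $-1$, we may write
$$
f(L_n,L_m) = f_{n,m}\, G_{n+m-1}, \qquad f(L_n,G_m) = f'_{n,m}\, L_{n+m-1},
$$
and look for an odd endomorphism $g$ of degree $-1$ of the form $g(L_n)=a_n G_{n-1}$, $g(G_n)=b_n L_{n-1}$, where the sequences $(a_n)$ and $(b_n)$ will be chosen so that $h:=f-\delta^1(g)$ vanishes on the four specified pairs.

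Using \eqref{cobound1} together with \eqref{crochet1} and \eqref{crochet2}, a direct computation yields the compact expressions
$$
\delta^1(g)(L_n, L_m) = (\{m\}-\{n\})\bigl(a_n + a_m - a_{n+m}\bigr)\, G_{n+m-1},
$$
$$
\delta^1(g)(L_n, G_m) = \bigl(b_m(\{m-1\}-\{n\}) - b_{n+m}(\{m+1\}-\{n\})\bigr)\, L_{n+m-1}.
$$
The decisive observation is that the four target conditions decouple: $h(L_n,L_1)=0$ and $h(L_2,L_{-1})=0$ involve only the $a_n$, while $h(L_1,G_m)=0$ and $h(L_{-1},G_1)=0$ involve only the $b_n$, so the two families may be solved independently.

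For the $a$-system, the equation $f_{n,1}=(\{1\}-\{n\})(a_n+a_1-a_{n+1})$ is automatic at $n=1$, fixes $a_0=f_{0,1}$ at $n=0$, and for $n \neq 1$ has a non-vanishing denominator $\{1\}-\{n\}$. Choosing $a_1:=0$ as a free parameter, the case $n=-1$ fixes $a_{-1}=a_0+f_{-1,1}/(\{1\}-\{-1\})$; the extra condition $h(L_2,L_{-1})=0$ then supplies the seed $a_2=-a_{-1}+f_{2,-1}/(\{-1\}-\{2\})$, and the two step-recursions propagate forward to $a_3, a_4, \dots$ and backward to $a_{-2},a_{-3},\dots$.

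For the $b$-system, $h(L_1,G_m)=0$ rewrites as the step relation $b_{m+1}=(b_m(\{m-1\}-\{1\})-f'_{1,m})/(\{m+1\}-\{1\})$, whose denominator is non-zero exactly when $m\neq 0$, and $h(L_{-1},G_1)=0$ determines $b_1$ in terms of $b_0$. Setting $b_0:=0$ produces $b_1=q f'_{-1,1}$, and the recursion then yields $b_2,b_3,\dots$ from $b_1$ and, read backwards, $b_{-1},b_{-2},\dots$ from $b_0$. There is no genuine obstacle here: the cocycle identities for $f$ are not even needed at this preparatory stage. The only points to verify are the decoupling of the two systems and the non-vanishing of every recursion denominator, both of which are immediate from the degree $-1$ ansatz and the standing hypothesis $q\neq 0,1$.
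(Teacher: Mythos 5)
Your construction is, in all essentials, the one the paper uses: the same degree $-1$ ansatz $g(L_n)=a_nG_{n-1}$, $g(G_n)=b_nL_{n-1}$, the same compact formulas for $\delta^1(g)$, the same decoupling of the $a$- and $b$-systems, and the same seeds $a_1=0$, $a_0=f_{0,1}=-f_{1,0}$, $a_{-1}$ from $n=-1$, $a_2$ from $h(L_2,L_{-1})=0$. That part is fine.

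There is, however, one concrete defect in the $b$-system: the seed $b_0:=0$. Evaluate your own relation $h(L_1,G_m)=0$ at $m=0$. The coefficient of $b_{m+1}$ is $\{m+1\}-\{1\}$, which vanishes there, but the coefficient of $b_m$ is $\{m-1\}-\{1\}=\{-1\}-\{1\}=-\tfrac{1+q}{q}\neq 0$, so the $m=0$ equation is not an obstruction (as it is in the degree $+1$ case, where you have apparently imported the habit of treating $m=0$ as excluded); it is a genuine constraint that \emph{determines} $b_0$, namely
\begin{equation*}
f'_{1,0}=b_0\bigl(\{-1\}-\{1\}\bigr)\quad\Longrightarrow\quad b_0=-\frac{q}{1+q}\,f'_{1,0},
\end{equation*}
which is exactly the paper's choice. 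With $b_0=0$ you get $h(L_1,G_0)=f'_{1,0}L_0$, which need not vanish, so your $g$ only achieves $h(L_1,G_m)=0$ for $m\neq 0$; the paper's proof (and the way the lemma is stated, with no restriction on $m$, in contrast to the degree $+1$ analogue where $m\in\mathbb{Z}^*$ is explicit) delivers it for all $m\in\mathbb{Z}$. The fix is local: take the value of $b_0$ above, after which $h(L_{-1},G_1)=0$ gives $b_1=qf'_{-1,1}-q\tfrac{\{3\}}{\{2\}}f'_{1,0}$ rather than $qf'_{-1,1}$, and the rest of your recursion is unchanged. A smaller remark: your claim that every recursion denominator is nonzero "by $q\neq 0,1$" silently assumes $q$ is not a root of unity (e.g.\ $\{1\}-\{n\}=0$ iff $q^{n-1}=1$); the paper makes the same tacit assumption, so this is a shared caveat rather than a gap specific to your argument.
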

\begin{proof}
  Since $f$ is an   odd $2$-cocycle of degree $-1$, we  assume that $f(L_{n},L_{m})=f_{n,m}G_{n+m-1}$ and  $f(L_{n},G_{m})=f_{n,m}^{'}L_{n+m-1}$ .\\
Let  $(a_{n})_{n\in\mathbb{Z}}$  be  the sequence  given recursively by

\begin{eqnarray*}
 &&a_{0}=-f_{1,0},\\
 &&a_{n}=a_{n+1}-\frac{1}{\{1\}-\{n\}}f_{1,n} \ \ \forall n\leq0,\\
 && a_{1}=0\\
  && a_{2}=-\frac{1}{\{2\}-\{-1\}}f_{2,-1}-a_{-1},\\
   && a_{n+1}=\frac{1}{\{1\}-\{n\}}f_{1,n}+a_{n},\ \ \ \forall n>1.
\end{eqnarray*}
 Let  $(b_{m})_{m\in\mathbb{Z}}$  be  the sequence  given recursively by
 \begin{eqnarray*}
  &&b_0=-\frac{q}{1+q}f'_{1,0},\\
&&b_{m}=\frac{\{m+1\}-\{1\}}{\{m-1\}-\{1\}}b_{m+1} +\frac{1}{\{m-1\}-\{1\}}f_{1,m}^{'} \ \ \forall m<0,\\
    &&b_{1}=qf_{-1,1}'-q\frac{\{3\}}{\{2\}}f'_{1,0},\\
&&b_{m+1}=\frac{\{1\}-\{m-1\}}{\{1\}-\{m+1\}}b_{m}+\frac{1}{\{1\}-\{m+1\}} f_{1,m}'  \ \ \forall m\geq1.
   \end{eqnarray*}
Let $g$ be an  odd   endomorphism of  degree $-1$  of $\mathcal{W}^{q}$ given recursively by
$g(L_{n})=a_{n}G_{n-1}$ and $g(G_{n})=b_{n}L_{n-1}.$ 
By (\ref{1cochain}) and (\ref{1cochain2}) we have  recursively
 \begin{eqnarray*}
\delta^{1}(g)(L_{n},L_{m})=(\{n\}-\{m\})\bigg(a_{n+m}-a_{m}-a_{n}\bigg)G_{n+m-1},
  \end{eqnarray*}
  and
   \begin{eqnarray}
\delta^{1}(g)(L_{n},G_{m})=\bigg((\{n\}-\{m+1\})b_{n+m}+(\{m-1\}-\{n\})b_{m})\bigg)L_{n+m-1}.
  \end{eqnarray}
   If $h=f-\delta^{1}(g)$ we have
  \begin{eqnarray*}
&& h_{1,n}=f_{1,n}-(\{1\}-\{n\})(a_{n+1}-a_{n})=0\ \forall n\in \mathbb{Z},\\
&&h_{2,-1}=f_{2,-1}-(\{2\}-\{-1\})(a_{1}-a_{2}-a_{-1})=0.\\
&&h_{1,m}^{'}=f_{1,m}^{'}-\bigg((\{1\}-\{m+1\})b_{m+1}+(\{m-1\}-\{1\})b_{m}\bigg)=0\ \forall m\in \mathbb{Z}.\\
&&h_{-1,1}^{'}=f_{-1,1}^{'}-(\{- 1\}-\{2\})b_{0}+\{-1\}b_1=0\ .\\
     \end{eqnarray*}

\end{proof}

\begin{lem}
Let $f$ be an odd $2$-cocycle of degree $-1$ such that $f(L_{n},L_{1})=0$ and $f(L_{2},L_{-1})=0$.
Then the cohomology class of $f$ is trivial.
\end{lem}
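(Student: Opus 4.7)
The plan is to mirror the proofs of Lemmas \ref{lemma3.3} and \ref{lemma3.4}, adapted to the odd degree $s=-1$ setting. Write
$$f(L_n,L_m)=a_{n,m}G_{n+m-1},\quad f(L_n,G_m)=b_{n,m}L_{n+m-1},\quad f(G_n,G_m)=c_{n,m}G_{n+m-1},$$
with the goal of showing $a\equiv b\equiv c\equiv 0$. Note that $f$ is taken after the preceding set-up lemma has been applied, so in addition to the two stated vanishings we also have $f(L_1,G_m)=0$ for $m\neq 0$ and $f(L_{-1},G_1)=0$, i.e.\ $b_{1,m}=0$ and $b_{-1,1}=0$.

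For the $a$-coefficients, specialising (\ref{1odd}) to $s=-1$ reproduces verbatim the defining identity (\ref{formul 1}) of Lemma \ref{lemma3.3}, and the hypotheses $a_{n,1}=0$ together with $a_{2,-1}=0$ (hence $a_{-1,2}=0$ by super skew-symmetry) exactly match its input. The layer-by-layer analysis used there --- solve at ``level'' $k=0,\pm 1,\pm 2$ and then induct for $|k|>2$ --- therefore transfers verbatim and gives $a_{n,m}=0$ for all $n,m\in\mathbb{Z}$.

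With $a\equiv 0$, the identity (\ref{2odd}) at $s=-1$ becomes a closed linear system in the $b_{n,m}$, structurally identical to (\ref{cobbbss}) except that the subscripts $\{m+p+1\}$ and $\{n+p+1\}$ are replaced by $\{m+p-1\}$ and $\{n+p-1\}$. I will then replay the argument of Lemma \ref{lemma3.4}: slice on the first index $n$, settle the cases $n=0,\pm 1,\pm 2$ by solving the resulting recursions using the normalisations $b_{1,m}=0$ and $b_{-1,1}=0$, and then propagate by induction for $|n|>2$. Finally, once $a\equiv b\equiv 0$, equation (\ref{oddjeudi}) at $s=-1$ reduces, on setting $n=0$ and using the symmetry $c_{m,p}=c_{p,m}$, to a non-vanishing scalar multiple of $c_{m,p}$ outside a small exceptional set, which is then dispatched by choosing other triples in the same identity, as in the even $s=2$ lemma.

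The main obstacle is the $b$-step. The index shift from $+1$ to $-1$ displaces the pairs $(k,\ell)$ at which the denominators $\{k\}-\{\ell\}$ in the recursions vanish, so the ``stuck'' coefficients where Lemma \ref{lemma3.4} required an external input will generally be different here. The hard part will be to identify the correct replacement triples $(n,m,p)$ in (\ref{2odd}) that pin down those few undetermined constants; once that is done, every $b_{n,m}$ collapses to zero and $f$ becomes a coboundary, completing the proof.
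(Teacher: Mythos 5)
Your treatment of the $a$-coefficients is exactly the paper's argument: at $s=-1$ the identity \eqref{1odd} collapses to \eqref{formul 1}, the hypotheses $a_{n,1}=0$ and $a_{2,-1}=0$ (hence $a_{-1,2}=0$ by skew-symmetry) are precisely the inputs of Lemma \ref{lemma3.3}, and that level-by-level analysis transfers verbatim. This is in fact all that the paper's own proof of this lemma establishes --- despite its wording, the statement is the degree $-1$ analogue of Lemma \ref{lemma3.3} and only concerns $\mathcal{W}_{0}^{q}\times\mathcal{W}_{0}^{q}$; the $b$-coefficients are deferred to the following lemma, which carries the extra hypotheses $f(L_{1},G_{m})=f(L_{-1},G_{1})=0$. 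Your $c$-step is also fine: with $a\equiv b\equiv 0$, equation \eqref{oddjeudi} at $n=0$, $s=-1$ gives $2q^{m+p}(q^{2}-1)c_{m,p}=0$ after using $c_{m,p}=c_{p,m}$, exactly as in the even degree-zero case.

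The gap is in your $b$-step, and it is twofold. First, you leave it genuinely open: you acknowledge that the recursions of Lemma \ref{lemma3.4} have their singular pairs displaced by the index shift and that you have not identified the triples in \eqref{2odd} that would pin down the stuck coefficients, so as written the argument does not close. Second, and more importantly, this is the wrong route: the laborious case analysis of Lemma \ref{lemma3.4} is only needed when the scalar obstruction vanishes, which happens for the $b$-coefficients exactly at $s=0$ (even) and $s=1$ (odd). At $s=-1$ one can argue as in Proposition \ref{20juin}: subtract the coboundary of the map $g$ with $g(L_{n})=0$ and $g(G_{p})=\frac{1}{q^{p+1}\{-2\}}f(L_{0},G_{p})$ (this does not disturb the already-established vanishing of the $a$-part, since $g$ kills all $L_{n}$), so that $h(L_{0},G_{p})=0$; then the $m=0$ specialization of \eqref{pair2} yields
\begin{equation*}
\Bigl((1+q^{p+1})\{n\}+(1+q^{n})\{p+1\}-2\{p+n-1\}\Bigr)h(L_{n},G_{p})
=\frac{2q^{n+p}(q^{-1}-q)}{1-q}\,h(L_{n},G_{p})=0,
\end{equation*}
and the scalar is nonzero, so $h(L_{n},G_{p})=0$ for all $n,p$ in one stroke. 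You should replace your proposed replay of Lemma \ref{lemma3.4} by this argument; with it, the full claim $a\equiv b\equiv c\equiv 0$ that you set out to prove does go through.
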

\begin{proof}
Let $f$ be an odd
$2$-cocycle  of degree $-1$. Then plugging $s=-1$ in (\ref{1odd}) leads to equation \eqref{formul 1}. Therefore the proof goes the same as Lemma  \ref{lemma3.3}.
\end{proof}
\begin{lem}
Let $f$ be an odd $2$-cocycle of degree $-1$ such that $f(L_{1},G_{n})=0$, $f(L_{-1},G_{1})=0$ and $f(L_{2},L_{-1})=0$. 
Then the cohomology class of $f$ is  trivial   on the space $\mathcal{W}_{0}^{q}\times\mathcal{W}_{1}^{q}$.
\end{lem}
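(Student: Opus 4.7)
The plan is to mirror the strategy of Lemma \ref{lemma3.4} with the shift $s=0\mapsto s=-1$. First I would note that because $f$ is odd, $f(L_n,L_m)\in\mathcal{W}_1^q$, so $[\alpha(G_p),f(L_n,L_m)]=0$ via $[G,G]=0$; consequently the cocycle identity (\ref{2odd}) on the triple $(L_n,L_m,G_p)$ involves only the coefficients $b_{n,p}$ defined by $f(L_n,G_p)=b_{n,p}L_{n+p-1}$, and the hypothesis $f(L_2,L_{-1})=0$ plays no direct role here (it is inherited from the preceding lemma that already disposes of the $\mathcal{W}_0^q\times\mathcal{W}_0^q$ piece).

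Next I would specialize (\ref{2odd}) with $s=-1$ and $m=1$, producing a three-term recursion among $b_{n+1,p}$, $b_{n,p+1}$ and $b_{n,p}$. The vanishing hypotheses translate to $b_{1,n}=0$ for all $n\in\mathbb{Z}$ and $b_{-1,1}=0$. I would then carry out the case-by-case analysis indexed by the level $k$ of the first slot, exactly in parallel with Cases 1--6 of Lemma \ref{lemma3.4}: treat $k=0,\ -1,\ -2,\ 2$ as base cases and extend to $|k|>2$ by induction on the recursion. The exceptional indices where the recursion coefficients degenerate --- the analogues of $a_{3,-2}$ and $a_{-3,2}$ in the even case --- are resolved by evaluating the cocycle identity at an auxiliary triple (the counterpart of the $(2,-2,4)$ equation), comparing two independent expressions for the same coefficient, and forcing the residual free parameter to vanish. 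The vanishing then propagates through the recursion to yield $b_{n,p}=0$ for all $n,p\in\mathbb{Z}$.

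The main obstacle will be the bookkeeping in the boundary cases: because $s=-1$ shifts the ``missing'' $q$-integer from $\{1\}$ to $\{-1\}$, the loci of exceptional indices move, and one must re-identify which auxiliary triple in (\ref{2odd}) delivers the extra equation pinning down each residual parameter. Once that correspondence is correctly matched, the remainder of the argument is a direct transcription of Lemma \ref{lemma3.4}, and the conclusion that the cohomology class of $f$ is trivial on $\mathcal{W}_0^q\times\mathcal{W}_1^q$ follows.
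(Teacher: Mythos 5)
Your plan heads into the long recursion of Lemma \ref{lemma3.4} even though this lemma sits in the ``easy'' regime where that machinery is not needed, and the justification you give for why the transcription should work rests on a misidentification of where the degeneracy occurs. The heavy case analysis of Lemma \ref{lemma3.4} is forced only when the diagonal coefficient produced by the $m=0$ specialization of the cocycle identity,
\[
(1+q^{p+1})\{n\}+(1+q^{n})\{p+1\}-2\{n+p+s\}=\frac{2q^{n+p}(q^{s}-q)}{1-q},
\]
vanishes, i.e.\ for $s=1$ in the odd case (and $s=0$ in the even case). For $s=-1$ this coefficient equals $2q^{n+p-1}(1+q)\neq 0$, so the short argument of Proposition \ref{20juin} applies verbatim: take $g(L_n)=0$ and $g(G_p)=\frac{1}{q^{p+1}\{-2\}}f(L_0,G_p)$, observe that $h=f-\delta^{1}(g)$ still vanishes on $\mathcal{W}_0^{q}\times\mathcal{W}_0^{q}$ (since $g$ kills the even generators, $\delta^1(g)(L_n,L_m)=0$) and satisfies $h(L_0,G_p)=0$, and then the $m=0$ identity forces $h(L_n,G_p)=0$ for all $n,p$. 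This is exactly what the paper does.

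Beyond being unnecessarily heavy, your plan as written has a genuine gap: the premise that the $s=-1$ computation is ``a direct transcription'' of Lemma \ref{lemma3.4} is false. Equation \eqref{2odd} coincides with \eqref{cobbbss} only for $s=1$ --- that is the observation the paper exploits for the degree-one odd case --- whereas for $s=-1$ the last two terms carry $\{m+p-1\}$ and $\{n+p-1\}$ in place of $\{m+p+1\}$ and $\{n+p+1\}$. Consequently every recursion in Cases 1--6, the locations of the undetermined coefficients, and the auxiliary triples needed to eliminate them would all have to be rederived with the shifted coefficients, and none of that is carried out; you correctly note that the exceptional loci move, but you give no verification that the shifted system still closes up to force $b_{n,p}=0$. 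As it stands the argument is a programme rather than a proof. Your opening observation that $[\alpha(G_p),f(L_n,L_m)]=0$ for odd $f$, so that \eqref{2odd} involves only the $b_{n,p}$, is correct, but the remainder needs either the full shifted computation or, far more efficiently, the one-line coboundary subtraction above.
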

\begin{proof}
We consider the linear map $g$ defined by $g(L_n)=0$ and $\displaystyle g(G_p)=\frac{1}{q^{p+1}\{-2\}}f(L_0,L_p)$ and argue as in the proof of Lemma \ref{20juin} .  
\end{proof}

Finally,
\begin{prop}
$$ H^2_{1,-1}=\{ 0\}$$
\end{prop}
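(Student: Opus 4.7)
The plan is to imitate exactly the structure used for $H^2_{1,1}=\{0\}$, running the three preceding reduction lemmas in sequence and finishing with an analysis of the surviving $G\otimes G$-component.

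First I would take an arbitrary odd $2$-cocycle $f$ of degree $-1$ and invoke the first lemma of this subsection to replace it by $h=f-\delta^1(g)$ with $h(L_n,L_1)=0$, $h(L_2,L_{-1})=0$, $h(L_1,G_m)=0$ and $h(L_{-1},G_1)=0$. Then, applying the second lemma of this subsection, the vanishing on $(L_n,L_1)$ and $(L_2,L_{-1})$ forces $h$ to be trivial on $\mathcal{W}_0^q\times\mathcal{W}_0^q$, i.e.\ we can arrange $a_{n,p}=0$ for all $n,p$ after a further coboundary adjustment. Next, by the third lemma of this subsection, the remaining conditions force $h$ to be trivial on $\mathcal{W}_0^q\times\mathcal{W}_1^q$, so we can also arrange $b_{n,p}=0$ for all $n,p$.

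At this stage $h$ is supported entirely on $\mathcal{W}_1^q\times\mathcal{W}_1^q$, i.e.\ $h(G_m,G_p)=c_{m,p}G_{m+p-1}$ with $c_{m,p}=c_{p,m}$. Since $a_{n,p}=b_{n,p}=0$, the cocycle equation \eqref{oddjeudi} with $s=-1$ collapses to
\begin{eqnarray*}
&&-(1+q^{p+1})(\{m+1\}-\{n\})c_{n+m,p}- (1+q^{m+1})(\{p+1\}-\{n\})c_{n+p,m}\\
&&+(1+q^{n})(\{m+p\}-\{n\})c_{m,p}=0,
\end{eqnarray*}
which is the exact analogue of \eqref{soddjeudi} with the shift parameter reduced by $1$. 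Setting $p=0$ yields a recurrence
\begin{equation*}
-(1+q)(\{m+1\}-\{n\})c_{n+m,0}-(1+q^{m+1})(\{1\}-\{n\})c_{n,m}+(1+q^{n})(\{m\}-\{n\})c_{m,0}=0,
\end{equation*}
and I would first extract $c_{0,0}=0$ by choosing $(n,m)=(1,-1)$, then deduce $c_{n,0}=0$ for $n\neq 1$ by letting $m=0$, and finally $c_{1,0}=0$ from $(n,m)=(1,1)$, exactly as in the degree-one case. Substituting $c_{n,0}=0$ back in gives $c_{n,m}=0$ for all $(n,m)\neq(1,1)$, and the single remaining coefficient $c_{1,1}$ is killed by feeding $(n,m,p)=(2,1,1)$ into the above identity.

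The main obstacle is essentially bookkeeping: one must verify that none of the coefficients $(1+q^{p+1})(\{m+1\}-\{n\})$ and $(\{m\}-\{n\})$ used as divisors vanish at the critical index choices, so that each step of the induction is genuinely forced. Because $q\in\mathbb{C}\setminus\{0,1\}$ and the indices involved are small integers, this is easy to check case by case, and no new idea beyond what is used for $H^2_{1,1}=\{0\}$ is required. Hence $H^2_{1,-1}=\{0\}$.
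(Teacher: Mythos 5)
Your overall architecture is the paper's: run the three reduction lemmas of this subsection to make $h$ vanish on $\mathcal{W}_{0}^{q}\times\mathcal{W}_{0}^{q}$ and $\mathcal{W}_{0}^{q}\times\mathcal{W}_{1}^{q}$, then analyse the surviving $G\otimes G$ component, and the first two reductions are fine. The gap is in the last step. Your collapsed identity is correct, but it is not a harmless analogue of \eqref{soddjeudi}: the coefficient of $c_{m,p}$ is now $(1+q^{n})(\{m+p\}-\{n\})$ instead of $(1+q^{n})(\{m+p+2\}-\{n\})$, and this changes exactly which coefficients vanish at your chosen indices. With $p=0$ and $(n,m)=(1,-1)$ the third coefficient is $(1+q)(\{-1\}-\{1\})\neq 0$, so you only get $c_{0,0}=(1+q^{-1})c_{-1,0}$, not $c_{0,0}=0$; with $(n,m)=(1,1)$ the coefficient of $c_{1,0}$ is $(1+q)(\{1\}-\{1\})=0$, so that equation determines $c_{2,0}$ and says nothing about $c_{1,0}$; and with $(n,m,p)=(2,1,1)$ all three coefficients are multiples of $\{2\}-\{2\}=0$, so the identity is vacuous and does not kill $c_{1,1}$. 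Three of your four critical substitutions degenerate; the ``bookkeeping'' you deferred is precisely where the argument fails.

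The repair is short and is what the paper in effect does (its degree $-1$ chain has no separate $\mathcal{W}_{1}^{q}\times\mathcal{W}_{1}^{q}$ lemma because the computation inside Proposition \ref{20juin} already covers that component for every $s\neq 1$): put $n=0$, not $p=0$, in your collapsed identity. Using $c_{p,m}=c_{m,p}$ it becomes
$$\Big(-(1+q^{p+1})\{m+1\}-(1+q^{m+1})\{p+1\}+2\{m+p\}\Big)c_{m,p}=0,$$
and the bracket simplifies to $-2q^{m+p}(1+q)\neq 0$. Hence $c_{m,p}=0$ for all $m,p$ in one stroke, exactly as in the even degree-zero case on $\mathcal{W}_{1}^{q}\times\mathcal{W}_{1}^{q}$. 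With that substitution in place of your case analysis, the proposal is correct.
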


\end{document}